\makeatletter\@addtoreset{equation}{section}\makeatother
\newtheorem{remark}[theorem]{Remark}
\newcounter{constantsnumber}
\newcommand{\CT}{\mathcal{T}}
\newcommand{\CV}{\mathcal{V}}
\newcommand{\CJ}{\mathcal{J}}
\newcommand{\M}{{M_{bc}}}
\newcommand{\Mb}{{(M_{bc})}}
\newcommand{\bn}{\mbox{\boldmath{$n$}}}
\newcommand{\BBR}{\mbox{$\mathbb{R}$}}
\newcommand{\BBN}{\mbox{$\mathbb{N}$}}
\newfont{\twelvemsb}{msbm10 at 11.6pt}
\def\snorm#1#2{|#1|_{#2}}
\definecolor{violet}{rgb}{0.580,0.,0.827}
\def\corr#1#2#3{\typeout{Warning : a correction remains in page
\thepage}%
        {\color{blue}{\ifmmode\text{\,\sout{\ensuremath{#1}}\,}\else\sout{#1}\fi}}%
        {\color{violet}{#2}}%
        {\color{red}{#3}}}
\title{A mixed finite element method for a sixth-order elliptic problem }
\author{J\'er\^ome Droniou\thanks{School of Mathematical Sciences,
Monash University, Victoria 3800, Australia. email: \texttt{jerome.droniou@monash.edu}} 
\and Muhammad Ilyas\footnotemark[2]\thanks{School of Mathematical and Physical Sciences, University of Newcastle, 
Callaghan, NSW 2308,  {\tt Bishnu.Lamichhane@newcastle.edu.au}},
\and  Bishnu P. Lamichhane\footnotemark[2]
\and 
Glen  E. Wheeler\thanks{Institute for Mathematics and its Applications, School
  Statistics, University of Wollongong, 
{\tt glenw@uow.edu.au}}
}
\begin{document}
\maketitle

\begin{abstract}
We consider a saddle-point formulation  
for a sixth-order partial differential equation and its 
finite element approximation, for two sets of boundary conditions.  
We follow the Ciarlet--Raviart formulation for the biharmonic 
problem to formulate our saddle-point problem and the 
finite element method. The new formulation allows us to use the $H^1$-conforming 
Lagrange finite element spaces to approximate the solution.  
We prove \emph{A priori} error estimates for our approach.
Numerical results are presented for linear and quadratic finite element methods. 

\end{abstract}

\begin{keywords}
sixth-order problem,  higher order partial differential equations, 
biharmonic problem, mixed finite elements, error estimates.
\end{keywords}

\begin{AMS}
65N30, 65N15, 35J35 (Primary) 35J40 (Secondary)
\end{AMS}

\pagestyle{myheadings}
\thispagestyle{plain}

\section{Introduction}

Partial differential equations (PDE) have a long and rich history of application in physical
problems.
One of their main advantages is in the modelling of ideal or desired structures \cite{you2004pde}.
In particular, one may wish to fill a curve with a solid material that satisfies certain
conditions along the boundary.
Depending on the application, there may be several constraints along the curve.
In many applications these filled curves (called \emph{components}) are fitted together to
form a larger shape.
It is natural and in some situations essential  that at least some of the derivatives of the
surface are continuous across the boundary curves.

In this context,  higher-order partial differential equations come to the fore:
for a solution of a partial differential equation of order $2k$, one may typically allow
restrictions on all derivatives up to order $(k-1)$ along the boundary curve.
This guarantees their continuity across components.

Continuity of the second derivative across boundaries, achieved by the
sixth-order PDE proposed in this article, is critical in several settings.
In the construction of automobiles, each panel is designed by a computer based on given
specifications.
Aesthetics are an important aspect, and in this regard, the composition of reflections
from the surface of a car panel must be considered.
If one prescribes only the derivatives up to first order along the boundary, then this
leaves open the possibility of the second derivative of the panel changing sign across the
boundary. In practical terms, this causes boundaries to move from being convex
to concave, or vice-versa.
Reflections will flip across such boundaries, which from an aesthetic
perspective is unacceptable.

The strength and maximal load bearing of tensile structures also depends
critically on the continuity of higher derivatives across component boundaries.
Force is optimally spread uniformly across components, however, where
derivatives of the surface are large, force and load are accumulated.
This can be by design.
It is dangerous however when force accumulates across a boundary due not to
design but to a discontinuity in one of the higher derivatives across that
boundary.
This concern can be alleviated when a number of derivatives dependent upon the
total expected load of the structure can be guaranteed to be continuous.
Two derivatives are guaranteed by our scheme and this is typically enough for
most minor structures, such as small buildings, residential homes, and
vehicles.

Sixth-order PDE have arisen in a variety of other contexts, from
propeller blade design \cite{prop} to ulcer modelling \cite{ulcer}.
Generic applications of sixth-order PDE to manufacturing are mentioned in
\cite{benson1967general,bloor1995complex}. Applications of sixth-order 
problems in surface modelling and fluid flows 
are considered in \cite{LX07,TDQ14}. 

To see that sixth-order PDE are natural for such applications, it is
instructive to view such an equation variationally.
Minimising the classical Dirichlet energy, we calculate the first variation of
the functional
\[
\int_\Omega |\nabla u|^2dx\,,
\]
and find the Laplace equation
\[
\Delta u = 0
\]
or, in the case of the gradient flow, the heat equation
\[
(\partial_t - \Delta )u = 0\,.
\]
Minimising the elastic energy, the integrand of the functional to be minimised
depends on an additional order of derivative of $u$, and so the Euler-Lagrange equation
and resulting gradient flow is of fourth-order.
If we are additionally interested in minimising the rate of change of curvature
across the surface, the `rate of change of acceleration' or \emph{jerk}, then
the functional will depend on three orders of derivatives of $u$.
The resulting Euler-Lagrange equation
\[
\Delta^3 u = 0
\]
and gradient flow
\[
(\partial_t - \Delta^3)u = 0
\]
depend on six orders of derivatives of $u$.
This perspective is taken in Section \ref{SCTformulation}, where the
variational formulation is made rigorous.
Recent resarch interest in such equations includes
\cite{rybka1,rybka2,parkins,rybka3}.

In geophysics, sixth-order PDE are used to overcome difficulties involving
complex geological faults \cite{yao2015smooth}.
Indeed, sixth-order PDE arise in a variety of geophysical contexts due to their
appearance as models in electromagneto-thermoelasticity \cite{sherief2002two}
and relation to equatorial electrojets \cite{whitehead1971equatorial}.
We remark that model PDE from geophysics are in general quite interesting to
study from a PDE perspective, with issues such as non-uniqueness and general
ill-posedness fundamental characteristics; we refer to \cite{EAA1973} for a
selection of such issues. 

The major contribution in our paper is a mixed finite element scheme for a
sixth-order partial differential equation. This allows one to accurately model
components arising from prescribed (up to and including) second order
derivatives along boundary curves.  {Another approach to approximate 
the solution of the sixth-order elliptic problem based on the interior penalty is considered by Gudi and Neilan \cite{GN11}.}
In Section \ref{SCTformulation} we introduce our setting, which considers two different
sets of boundary conditions: simply supported, and clamped.
We use constrained minimisation to cast our problems in a mixed formulation as
in the case of the biharmonic equation \cite{Cia78,DP01,Lam11a}
(other approaches to mixed formulations for the biharmonic equation can be found in
\cite{CR74,CG75,Fal78,FO80,BOP80,Mon87,Lam11b}).
The resulting saddle-point problem allows us to apply low order $H^1$-conforming finite element methods 
to approximate the solution of the sixth-order problem. This approximation is
described, for both sets of boundary conditions, in Section \ref{sec:ana}. A-priori
error estimates are proved in Section \ref{sec:error.est}.
The optimality of the predicted rates of convergences is illustrated, for each
boundary condition, in Section \ref{sec:numer} through various numerical
results. 

\section{A mixed formulation of a sixth-order elliptic equation}
\label{SCTformulation}

Let $\Omega \subset \BBR^d$, $d\in \{2,3\}$, be a bounded
domain  with polygonal or polyhedral boundary $\partial\Omega$ and 
outward pointing normal $\bn$ on  $\partial\Omega$.
We consider the sixth-order problem 
\begin{equation}\label{biharm}
-\Delta^3 u =f \quad\text{in}\quad \Omega
\end{equation}
with $f\in H^{-1}(\Omega)$  and two sets of boundary conditions (BCs). 
The first set is the set of \emph{simply supported} boundary conditions 
\begin{equation}\label{sabc}
u =\Delta u=\Delta^2 u=0\quad\text{on}\quad \partial\Omega,
\end{equation}
and the second set is the set of \emph{clamped} boundary conditions
\begin{equation}\label{clabc}
u =\frac{\partial u}{\partial \bn}=\Delta u=0\quad\text{on}\quad \partial\Omega.
\end{equation}


We aim at obtaining a formulation only based on the $H^1$-Sobolev space.
We begin by defining the Lagrange multiplier space:
\\
\begin{itemize}
	\item {\bf Simply supported boundary conditions.} We set
		\[
			\M = H_0^1(\Omega)\,,
		\]
		and equip $\M$ with the norm
		\[
			\|v\|_{\M} = \|v\|_{1,\Omega}\,.
		\]
	\item {\bf Clamped boundary conditions.} We set
		\[
			\M = \{ q \in H^{-1}(\Omega):\, \Delta q \in H^{-1}(\Omega)\}\,,
		\]
		where $\Delta q$ is interpreted in the distributional sense, and the space $\M$ is 
		equipped with the graph norm
		\[
			\|q\|_{\M} = \sqrt{\|q\|_{-1,\Omega}^2 + \|\Delta q\|_{-1,\Omega}^2}.
		\]
		We use the notation  $\langle \cdot,\cdot \rangle$ for the duality pairing 
		between the two spaces $H^1_0(\Omega)$ and $H^{-1}(\Omega)$, so that 
		$\langle u,q\rangle$ and $\langle u,\Delta q\rangle$ are well defined for
		$ u \in H^1_0(\Omega)$ and $q\in  \M$. We note that 
		this space $\M$ is less regular than $H^1(\Omega)$ (see \cite{BGM92,Zul15}).
\end{itemize}
%
%
\medskip

Let $ k \in \BBN\cup \{0\}$. 
We use the standard notations to represent Sobolev spaces \cite{Ada75,BS92}. 
We use $(\cdot,\cdot)_{k,\Omega}$ and $\|\cdot\|_{k,\Omega}$ to denote the 
inner product and norm in $H^k(\Omega)$, respectively. 
When $k=0$, we get the inner product $(\cdot,\cdot)_{0,\Omega}$ and the norm 
$\|\cdot\|_{0,\Omega}$ in $L^2(\Omega)$. The norm of
$W^{k,p}(\Omega)$ is denoted by $\|\cdot\|_{k,p,\Omega}$.

To obtain the $H^1$-based formulation of our boundary value problems, we
introduce an additional unknown $\phi = \Delta u$ and write a weak form of this
equation by formally multiplying by a function $ q \in \M$  and 
integrating over $\Omega$, as in \cite{BGM92,Zul15}.  The variational equation
is now written as 
\[
\langle \phi,q\rangle - \langle u, \Delta q \rangle =0,\quad q \in \M.
\]
Keeping in mind that $u$ will be taken in $H^1_0(\Omega)$, and considering the
BC-dependent $\M$, we see that this variational definition of ``$\phi=\Delta
u$'' also formally imposes the condition $\frac{\partial u}{\partial
\boldsymbol{n}}=0$ on $\partial\Omega$, in the case of clamped BCs. For simply
supported BCs, this does not impose any additional boundary conditions.

To write the mixed formulation in a standard setting, we introduce 
the function space 
$V =H^1_0(\Omega)\times H_0^1(\Omega)$  with the inner product $(\cdot,\cdot)_{V}$ 
defined as \[ ((u,\phi),(v,\psi))_{V} =(\nabla u,\nabla v)_{0,\Omega}+ 
(\nabla \phi,\nabla \psi)_{0,\Omega}\] 
 and with the norm $\|\cdot\|_{V}$ induced by this inner product. 
We now consider the constraint minimisation problem of finding $(u,\phi) \in \CV$  such that 
\begin{equation}\label{cmbiharm}
\CJ(u,\phi)=\inf_{(v,\psi) \in \CV}\CJ(v,\psi),
\end{equation}
where 
\begin{equation}\label{def:CV}
\begin{aligned}
\CJ(v,\psi)={}&\frac{1}{2} \int_{\Omega}|\nabla \psi|^2\,dx-\langle f,v\rangle,\quad\text{and}\\ 
\CV={}&\{(v,\psi)\in V:\;\langle\psi,q\rangle-\langle u, \Delta q \rangle=0,\;
q \in \M\}.
\end{aligned}
\end{equation}
Looking for $(u,\psi)$ in $V$ enables us to account for the conditions
$u=\Delta u=0$ on $\partial\Omega$, valid for both simply supported and clamped BCs.

The problem \eqref{cmbiharm} can be recast as 
a saddle-point formulation: 
find $((u,\phi),\lambda) \in V \times \M$ so that 
\begin{equation}\label{wbiharm}
\begin{array}{llccc}
a((u,\phi),(v,\psi))+&b((v,\psi),\lambda)&=&\ell(v),&\quad (v,\psi) \in  V, \\
b((u,\phi),\mu)&&=&0,&\quad \mu\in \M,
\end{array}
\end{equation}
where
\begin{equation}\label{defbiharm}
\begin{aligned}
&a((u,\phi),(v,\psi))=\int_{\Omega}\nabla\phi\cdot \nabla \psi\,dx,\;\;
b((v,\psi),\mu)=\langle \psi,\mu\rangle-\langle v, \Delta \mu \rangle,\\
&\ell(v)=\langle f,v\rangle.
\end{aligned}
\end{equation}
Using $v=0$ and $\psi\in C^\infty_c(\Omega)$ in the first equation in \eqref{wbiharm}
shows that $\Delta \phi=\lambda$. In the case of simply supported boundary conditions, since
$\lambda\in \Mb=H^1_0(\Omega)$ and $\phi=\Delta u$, this enables us to formally recover the last
missing boundary condition $\Delta^2u=0$ on $\partial \Omega$.

\medskip

The following theorem, whose proof can be found in the appendix, states the well-posedness of our continuous
saddle-point problem.

\begin{theorem}
\label{TMexistence}
There exists a unique $((u,\phi),\lambda) \in V \times \M$ satisfying \eqref{wbiharm}.
\end{theorem}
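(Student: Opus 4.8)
The plan is to apply the standard Babuška--Brezzi theory for saddle-point problems of the form \eqref{wbiharm}. By the classical result (see e.g. \cite{Cia78, DP01}), existence and uniqueness follow from four conditions: continuity of $a(\cdot,\cdot)$ on $V\times V$ and of $b(\cdot,\cdot)$ on $V\times\M$; continuity of $\ell(\cdot)$ on $V$; coercivity of $a(\cdot,\cdot)$ on the kernel $\CV$ defined in \eqref{def:CV}; and the inf--sup (LBB) condition on $b(\cdot,\cdot)$. Continuity of all three forms is routine: for $a$ it is just Cauchy--Schwarz in $L^2$ applied to the gradients; for $\ell$ it follows from $f\in H^{-1}(\Omega)$ and the definition of the $V$-norm; and for $b$ one uses Cauchy--Schwarz for the duality pairings together with the definition of $\|\cdot\|_{\M}$ in each of the two cases, noting that $\langle v,\Delta\mu\rangle$ is controlled by $\|v\|_{1,\Omega}\|\Delta\mu\|_{-1,\Omega}$ in the clamped case and $\langle u,\Delta q\rangle = -(\nabla u,\nabla q)_{0,\Omega}$ is controlled by $\|u\|_{1,\Omega}\|q\|_{1,\Omega}$ in the simply supported case.

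For coercivity on the kernel, I would take $(v,\psi)\in\CV$. The bilinear form gives $a((v,\psi),(v,\psi))=\|\nabla\psi\|_{0,\Omega}^2$, which by the Poincar\'e inequality controls $\|\psi\|_{1,\Omega}^2$. The point is then to control $\|v\|_{1,\Omega}$ by $\|\psi\|_{1,\Omega}$ using the constraint: the kernel condition $\langle\psi,q\rangle-\langle v,\Delta q\rangle=0$ for all $q\in\M$ says precisely that $v$ is the weak solution of $\Delta v=\psi$ with the appropriate boundary condition ($v\in H^1_0(\Omega)$, plus $\partial v/\partial\bn=0$ in the clamped case being encoded through the larger test space). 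Elliptic regularity / the a priori estimate for the Dirichlet (or the clamped) Laplacian then gives $\|v\|_{1,\Omega}\le C\|\psi\|_{-1,\Omega}\le C\|\psi\|_{1,\Omega}$, whence $\|(v,\psi)\|_V^2\le C\,a((v,\psi),(v,\psi))$.

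The main obstacle is the inf--sup condition: given $(u,\phi)\in V$ (really, given a target in the dual of $\M$), one must produce $\mu\in\M$ with $b((u,\phi),\mu)\ge\beta\|(u,\phi)\|_V\|\mu\|_{\M}$. The natural strategy is, for a given $\mu_0$, to solve an auxiliary problem so that $b$ reproduces the $V$-norm. Concretely, given $(u,\phi)$, one chooses $\mu$ to solve a boundary value problem built from $u$ and $\phi$ — in the simply supported case, taking $\mu\in H^1_0(\Omega)$ with $-\Delta\mu = u$ (so that $\langle v,\Delta\mu\rangle=-(v,u)$ up to sign conventions and one recovers $\|u\|$-type control), combined with the $\langle\psi,\mu\rangle$ term to recover control of $\phi$; the $\M$-norm of $\mu$ is then bounded by elliptic regularity. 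The clamped case is more delicate because $\M$ is the less regular space $\{q\in H^{-1}:\Delta q\in H^{-1}\}$ and one must verify that the constructed $\mu$ genuinely lies in $\M$ with controlled graph norm; here I would lean on the results of \cite{BGM92, Zul15} on this space and on the solvability of $\Delta q = g$ in the very weak sense. Once all four hypotheses are verified, the Babuška--Brezzi theorem yields the unique $((u,\phi),\lambda)\in V\times\M$ solving \eqref{wbiharm}, completing the proof.
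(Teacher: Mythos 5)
Your verification of continuity and of coercivity on the kernel is sound and essentially matches the paper's argument: the paper tests the constraint with $\mu=u\in H^1_0(\Omega)\subset\M$ to get $\|\nabla u\|_{0,\Omega}^2=-\int_\Omega\phi u\,dx\le C\|\phi\|_{0,\Omega}\|\nabla u\|_{0,\Omega}$ and concludes with Poincar\'e, which is the same estimate you obtain by viewing $v$ as the weak solution of $\Delta v=\psi$.

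The inf--sup condition, however, is where your proposal has a genuine gap, and in two respects. First, the quantifiers are reversed: the LBB condition required here is that \emph{for every} $\mu\in\M$ there exists $(v,\psi)\in V$ with $b((v,\psi),\mu)\ge\beta\,\|(v,\psi)\|_V\|\mu\|_{\M}$, whereas you set out to produce a $\mu$ for a given $(u,\phi)$; the auxiliary problem $-\Delta\mu=u$ constructs an element of $\M$ from an element of $V$, which is the wrong direction and does not address the condition as stated. Second, no auxiliary elliptic problem is needed at all, and introducing one obscures the point that makes the condition easy: the norms on $\M$ are built precisely from duality against the components of $V$. In the simply supported case one takes $(v,\psi)=(\mu,0)$ and computes $b((\mu,0),\mu)=-\langle\mu,\Delta\mu\rangle=\|\nabla\mu\|_{0,\Omega}^2$, which gives the bound immediately. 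In the clamped case one tests separately with $(v,0)$ and $(0,\psi)$: since $\|\mu\|_{-1,\Omega}$ and $\|\Delta\mu\|_{-1,\Omega}$ are by definition suprema of $\langle\psi,\mu\rangle$ and $\langle v,\Delta\mu\rangle$ over $H^1_0(\Omega)$, each piece of the graph norm is recovered directly, with no recourse to very weak solvability of $\Delta q=g$ or to regularity results for the space $\M$. As written, your sketch of the ``main obstacle'' would not assemble into a proof without being substantially reworked along these lines.
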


 
\section{Finite element discretisations}\label{sec:ana}
We consider a quasi-uniform and shape-regular triangulation $\CT_h$ of the 
polygonal domain $\Omega$, where $\CT_h$
consists of triangles, tetrahedra, parallelograms or hexahedra.
Let $S^k_h \subset H^1(\Omega)$ be a standard Lagrange finite element 
space of degree $k\geq 1$ based on the triangulation $\CT_h$ 
with the following approximation property: For $u \in H^{k+1}(\Omega)$
\begin{equation}\label{approx0}
\inf_{v_h \in S^k_h} \left(\|u-v_h\|_{0,\Omega} +h \|u-v_h\|_{1,\Omega} \right)\leq C h^{k+1} \|u\|_{k+1,\Omega}.
\end{equation}
The definition of discrete Lagrange multiplier spaces $\Mb^k_h$ requires some work.
A standard requirement for the construction is the following list of properties:
\begin{enumerate}
\item[[P1\!\!]] $\Mb^k_h \subset H^1(\Omega)$.
\item[[P2\!\!]]  There is a constant $C$ independent of the triangulation such 
that 
\begin{eqnarray*}
\|\theta_h\|_{0,\Omega} \leq C \sup_{\phi_h \in S^k_{h,0}} 
\frac{ \displaystyle\int_{\Omega} \theta_h\phi_h\,dx} {\|\phi_h\|_{0,\Omega}},
\quad \theta_h \in \Mb^k_{h}.
\end{eqnarray*}
\item[[P3\!\!]] There is a constant $C$ independent of the triangulation
such that, if $(u,\phi,\lambda)$ is a solution to \eqref{wbiharm},
$\lambda\in H^k(\Omega)$ and $\mu_h\in \Mb^k_h$ is the
$H^1$-orthogonal projection of $\lambda$ on $\Mb^k_h$, then
\begin{equation}\label{approx1}
\|\lambda-\mu_h\|_{0,\Omega} \leq C h^k \|\lambda\|_{k,\Omega}.
\end{equation}
\end{enumerate}
We now define:
\begin{itemize}
\item {\bf Simply supported boundary conditions.} In this case we may simply take
\[
	S^k_{h,0}= S^k_h\cap H^1_0(\Omega)\,,
	\quad
	V^k_h = S^k_{h,0}\times S^k_{h,0}\,,
	\quad
	\Mb^k_h = S^k_{h,0}
	\,.
\]

The norm on $\Mb_h^k$ is defined by
\[
	\|\mu_h\|_{h} = \sqrt{\|\mu_h\|_{-1,h}^2+ \|\Delta \mu_h\|_{-1,h}^2}\quad\text{with}\quad
	\|\mu_h\|_{-1,h}= \sup_{v_h \in S^k_{h,0}} \frac{\langle \mu_h,v_h\rangle}{\|\nabla {v_h}\|_{0,\Omega}}
\,.
\]
The reader may wish to compare this with \cite{Zul15}, where a similar norm is used, albeit with
$\mu_h\in L^2(\Omega)$.
Properties [P1] and [P2] are trivial.
Property [P3] is established by invoking the fact that $\lambda=0$ on $\partial\Omega$
and using the approximation results in \cite{Bra01,BS94}.

\item {\bf Clamped boundary conditions.} The first two spaces are
\[
	S^k_{h,0}= S^k_h\cap H^1_0(\Omega)\,,
	\quad
	V^k_h = S^{k+1}_{h,0}\times S^k_{h,0}\,,
\]
	however the space $\Mb^k_h$ is not so easily defined.
If we take $\Mb^k_h = S^k_{h,0}$, the Lagrange multiplier
space does not have the required approximation property, due to the constraint
on the boundary condition. On the other hand, if
we take $\Mb^k_h= S^k_h$, the stability assumption 
[P2] will be lost.  

To overcome this, we draw inspiration from the idea used in the mortar
finite element method \cite{LSW05,Lam06}: We construct the Lagrange multiplier
space $\Mb^k_h$ satisfying $\dim \Mb^k_h = \dim S^k_{h,0}$ and the
approximation property \eqref{approx1}.  To construct the basis functions of
$\Mb_h^k$ for the clamped boundary condition we start with $S_h^k$ and remove
all basis functions of $S_h^k$ associated with the boundary of the domain
$\Omega$.  We construct the basis functions of $\Mb^k_h$ according to the following
steps:
\begin{enumerate}
\item For a basis function $\varphi_n$  of $S_h^k$ associated with the point $x_n$ 
on the boundary we find a closest \emph{internal} triangle/tetrahedron/parallelotope $T \in \CT_h$
(this means that $T$ does not touch $\partial\Omega$).
\item The basis functions $\{\varphi_{T, i}\}_{i=1}^m$ associated with internal points of
$T$ can be considered as polynomials defined on the whole domain $\Omega$. Hence, we can compute
$\{\alpha_{T,i} \}_{i=1}^m$  as $\alpha_{T,i}  = \varphi_{T, i}(x_n)$ for $i=1,\cdots,m.$
{This means when computing $\{\alpha_{T,i} \}_{i=1}^m$ 
we regard $  \{\varphi_{T, i}\}_{i=1}^m$ as polynomials with support on $\overline{\Omega}$.}
 For the linear finite element, the coefficients $\{\alpha_{T,i} \}_{i=1}^m$ are the barycentric coordinates 
 of $x_n$ with respect to $T$. 
\item Then we modify all the basis functions  $\{\varphi_{T,i}\}_{i=1}^m$ associated
with $T$ as $ \tilde \varphi_{T,i}= \varphi_{T,i} + \alpha_{T,i} \varphi_n$.
\end{enumerate}
In other words, basis functions associated with boundary points are ``redistributed'' on
basis functions associated with nearby internal points, which ensures that, even after removing
these boundary basis functions, the space $\Mb^k_h$ has the same approximation property
as $S_h^k$.
The norm on $\Mb_h^k$ is defined by
\[ \|\mu_h\|_{h} = \sqrt{\|\mu_h\|_{-1,h}^2+ \|\Delta \mu_h\|_{-1,h*}^2}\quad\text{with}\quad
\|\mu_h\|_{-1,h*}= \sup_{v_h \in S^{k+1}_{h,0}} \frac{\langle \mu_h,v_h\rangle}{\|\nabla {v_h}\|_{0,\Omega}}.\]
Then [P2] and the optimal
approximation property \eqref{approx1} follow (see \cite{Lam06,LSW05}).
\end{itemize}
~\\

In the following, we use a generic constant $C$, which takes 
different values in different occurrences but is always independent of the 
mesh size. 
Now, the finite element problem is to find 
$((u_h,\phi_h),\lambda_h) \in V^k_h \times \Mb^k_h$ so that 
\begin{equation}\label{weakh}
\begin{array}{llccc}
a_h((u_h,\phi_h),(v_h,\psi_h))+&b((v_h,\psi_h),\lambda_h)&=&\ell(v_h),&\quad (v_h,\psi_h) \in  V^k_h, \\
b((u_h,\phi_h),\mu_h)&&=&0,&\quad \mu_h\in \Mb^k_h.
\end{array}
\end{equation}
For simply supported BCs, we can take $a_h=a$.
For the case of clamped boundary conditions, $S^k_{h,0}$ is not contained in
$\Mb^k_h$, and so $a_h(\cdot,\cdot)$ is a stabilised form of the bilinear form $a$.
This allows us to establish coercivity (see the proof of Theorem \ref{TMexistdiscrete} below).
We set $a_h(\cdot,\cdot)$ to be 
\begin{equation}
	\label{EQstabilised}
	a_h((u_h,\phi_h),(v_h,\psi_h)) = a((u_h,\phi_h),(v_h,\psi_h)) + 
					 \int_{\Omega} (\phi_h - \Delta_h u_h) (\psi_h - \Delta_h v_h)\,dx,
\end{equation}
where, for $w\in H^1_0(\Omega)+S^{k+1}_{h,0}$, $\Delta_h w \in S^{k+1}_{h,0}$ is given by
\begin{equation}\label{def.Deltah}
	\int_{\Omega} \Delta_h w\, v_h \,dx = -\int_{\Omega} \nabla w\cdot \nabla v_h \,dx, \quad 
v_h \in S_{h,0}^{k+1}.
\end{equation}

\begin{remark} 
For simply supported BCs, for which $a_h=a$, the saddle-point problem \eqref{weakh} can be, as with the continuous
problem, recast in the form of a constraint minimisation problem: find  $(u_h,\phi_h) \in \CV^k_h$ such that
\begin{equation}\label{cmbiharmh}
\CJ(u_h,\phi_h)=\inf_{(v_h,\psi_h) \in \CV^k_h}\CJ(v_h,\psi_h),
\end{equation}
where $ \CV^k_h $ is a kernel space defined as 
\begin{equation}\label{def:CV}
 \CV^k_h =\{ (v_h, \psi_h) \in V^k_h :\, b((v_h,\psi_h),\mu_h) = 0,\; \mu_h \in \Mb^k_h\}.
\end{equation}
\end{remark}

We now show the existence of a unique solution to \eqref{weakh}.

\begin{theorem}
There exists a unique $(u_h,\phi_h) \in V^k_h$ solution to \eqref{weakh}.
\label{TMexistdiscrete}
\end{theorem}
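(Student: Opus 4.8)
The statement asserts existence and uniqueness of a solution to the discrete saddle-point problem \eqref{weakh}; since both $V^k_h$ and $\Mb^k_h$ are finite-dimensional, this is a matter of verifying the standard Brezzi conditions (discrete coercivity on the kernel, plus a discrete inf--sup condition on $b$), which together yield invertibility of the associated square linear system. I would treat the two sets of boundary conditions separately, since the bilinear form $a_h$ differs.

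\emph{Step 1: the discrete kernel and the constraint.} For $(v_h,\psi_h)$ in the discrete kernel $\CV^k_h=\{(v_h,\psi_h)\in V^k_h:\ b((v_h,\psi_h),\mu_h)=0\ \forall\mu_h\in\Mb^k_h\}$, the constraint reads $\langle\psi_h,\mu_h\rangle-\langle v_h,\Delta\mu_h\rangle=0$ for all $\mu_h\in\Mb^k_h$. Using the definition \eqref{def.Deltah} of the discrete Laplacian $\Delta_h$ and the fact that (for simply supported BCs) $\Mb^k_h=S^k_{h,0}\subset S^{k+1}_{h,0}$, this forces $\psi_h=\Delta_h v_h$ pointwise; for clamped BCs the construction of $\Mb^k_h$ only guarantees this in a weaker sense, but it is precisely for that reason that the stabilisation term $\int_\Omega(\phi_h-\Delta_h v_h)(\psi_h-\Delta_h v_h)\,dx$ was added to $a_h$.

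\emph{Step 2: coercivity on the kernel.} For simply supported BCs, on the kernel $\psi_h=\Delta_h v_h$, so $a((v_h,\psi_h),(v_h,\psi_h))=\|\nabla\psi_h\|_{0,\Omega}^2$; combined with a Poincar\'e inequality in $H^1_0(\Omega)$ controlling $\|\psi_h\|_{0,\Omega}$, and with the observation that the constraint together with property [P2] forces $v_h$ to be controlled by $\psi_h$ (so that $\|\nabla v_h\|_{0,\Omega}$ is also bounded by $\|\nabla\psi_h\|_{0,\Omega}$), one gets $a((v_h,\psi_h),(v_h,\psi_h))\ge C\|(v_h,\psi_h)\|_V^2$ on $\CV^k_h$. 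For clamped BCs one writes, for general $(v_h,\psi_h)\in V^k_h$,
\[
a_h((v_h,\psi_h),(v_h,\psi_h)) = \|\nabla\psi_h\|_{0,\Omega}^2 + \|\psi_h-\Delta_h v_h\|_{0,\Omega}^2,
\]
and then $\|\Delta_h v_h\|_{0,\Omega}\le\|\psi_h\|_{0,\Omega}+\|\psi_h-\Delta_h v_h\|_{0,\Omega}$; since $\|\nabla v_h\|_{0,\Omega}^2 = \int_\Omega\Delta_h v_h\,v_h\,dx\le\|\Delta_h v_h\|_{0,\Omega}\|v_h\|_{0,\Omega}$ by \eqref{def.Deltah} with $v_h\in S^{k+1}_{h,0}$, a Poincar\'e inequality on $v_h$ closes the estimate and gives coercivity of $a_h$ on all of $V^k_h$ (in particular on the kernel).

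\emph{Step 3: discrete inf--sup for $b$.} I must show there is $\beta>0$, independent of $h$, with
\[
\sup_{(v_h,\psi_h)\in V^k_h}\frac{b((v_h,\psi_h),\mu_h)}{\|(v_h,\psi_h)\|_V}\ \ge\ \beta\,\|\mu_h\|_h\qquad\forall\mu_h\in\Mb^k_h.
\]
Given $\mu_h$, I choose $v_h=0$ and $\psi_h\in S^k_{h,0}$ (resp. in the appropriate space) realizing, via property [P2], the $\|\mu_h\|_{-1,h}$ part of the norm; and I choose $\psi_h=0$ and $v_h$ realizing the $\|\Delta_h\mu_h\|_{-1,h}$ (resp. $\|\cdot\|_{-1,h*}$) part — here one uses that $-\langle v_h,\Delta\mu_h\rangle$ can be made comparable to $\|\Delta\mu_h\|_{-1,h}\|\nabla v_h\|_{0,\Omega}$ by the very definition of that dual norm as a supremum over the discrete space, together with [P2] applied to $\Delta_h\mu_h$. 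Summing the two choices gives the bound on the full graph norm $\|\mu_h\|_h$.

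\emph{Step 4: conclusion.} With discrete coercivity on the kernel and the discrete inf--sup condition in hand, the finite-dimensional saddle-point theory (equivalently, the associated square matrix is injective hence invertible) yields existence and uniqueness of $((u_h,\phi_h),\lambda_h)$, and in particular of $(u_h,\phi_h)$, solving \eqref{weakh}.

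\emph{Main obstacle.} The delicate point is the inf--sup condition for $b$ in the clamped case: the modified Lagrange multiplier space $\Mb^k_h$ is built by redistributing boundary basis functions, so one must verify that properties [P2] and the approximation property \eqref{approx1} genuinely transfer to this modified space, and that the two pieces of the graph norm $\|\mu_h\|_h$ can be simultaneously tested — this is exactly where the construction borrowed from the mortar method \cite{Lam06,LSW05}, and the stabilisation \eqref{EQstabilised} that decouples the coercivity argument from the inclusion $S^k_{h,0}\subset\Mb^k_h$, do the real work.
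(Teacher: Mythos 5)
Your proposal is correct and follows essentially the same route as the paper: uniform coercivity of $a_h$ on the discrete kernel (obtained in the simply supported case by testing the constraint with $\mu_h=v_h$ so that $\|\nabla v_h\|_{0,\Omega}$ is controlled by $\|\nabla\psi_h\|_{0,\Omega}$, and in the clamped case from the stabilisation term together with $\|\nabla v_h\|_{0,\Omega}\le C\|\Delta_h v_h\|_{0,\Omega}$), the discrete inf--sup condition for $b$ by testing with $(v_h,0)$ and $(0,\psi_h)$ separately, and the finite-dimensional saddle-point theory. The only slips are cosmetic and do not affect the argument: $\int_{\Omega}\Delta_h v_h\,v_h\,dx=-\|\nabla v_h\|_{0,\Omega}^2$ (a sign), and in the simply supported case the identity $\psi_h=\Delta_h v_h$ holds for the discrete Laplacian tested against $S^k_{h,0}$ rather than for the operator of \eqref{def.Deltah}, which is tested against $S^{k+1}_{h,0}$.
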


\begin{proof}
Existence of a unique solution to \eqref{weakh} relies on the same three
properties as in the continuous case, namely:

\begin{enumerate}
\item The bilinear forms $a(\cdot,\cdot)$, $b(\cdot,\cdot)$ and the linear form 
$\ell(\cdot)$ are uniformly continuous on $V^k_h \times V^k_h$, $V^k_h \times \Mb^k_h$ and 
$V^k_h$, respectively. The bilinear form $a_h(\cdot,\cdot)$ is continuous (albeit
not uniformly) on $V^k_h \times V^k_h$. Here, $V_h^k$ is endowed with the norm of $V$, and
$\Mb^k_h$ with its norm $\|\cdot\|_h$.
\item The bilinear form $a_h(\cdot,\cdot)$ is uniformly coercive on the kernel space $\CV^k_h$ defined
by \eqref{def:CV}.  
\item The bilinear form $b(\cdot,\cdot)$ satisfies the following inf--sup condition 
\[ \inf_{\mu_h \in \Mb^k_h} \sup_{(v_h, \psi_h)  \in V^k_h} \frac{b((v_h,\psi_h),\mu_h)} 
{ \|(v_h,\psi_h)\|_{V} \|\mu_h\|_{h}} \geq \tilde \beta, \]
where $\tilde \beta $ is a constant independent of the mesh-size.

\end{enumerate}
Since $V^k_h \subset V$, the uniform continuities of $a(\cdot,\cdot)$ and $\ell(\cdot)$
are trivial. The continuity of $a_h(\cdot,\cdot)$ on the finite dimensional space
$V^k_h$ is obvious. However, since we cannot claim that $\|\Delta_h v\|_{0,\Omega}
\le C\|v\|_{1,\Omega}$ with $C$ independent on $h$, this continuity of $a_h(\cdot,\cdot)$ is
not uniform; this is not required to obtain the existence and uniqueness of a solution to the
scheme, but it will force us to  define a stronger, mesh-dependent norm for the convergence analysis (see
Section \ref{sec:convergence.clamped}).
 The uniform continuity of the bilinear form $b(\cdot, \cdot)$ is
proved as follows.  Note that since $\psi_h \in S_{h,0}^k$ we have from the
definition of $\|\cdot\|_{-1,h}$ - norm 
\begin{eqnarray*}
\|\mu_h\|_{-1,h} \|\nabla\psi_h\|_{0,\Omega} = \sup_{v_h \in S_{h,0}^k}\frac{
\int_{\Omega} v_h\,\mu_h\,dx }{\|\nabla v_h\|_{0,\Omega}} \|\nabla\psi_h\|_{0,\Omega}\geq 
\int_{\Omega} \psi_h\,\mu_h\,dx.
 \end{eqnarray*}
For the simply supported case with $v_h \in  S_{h,0}^k$ we have 
\begin{eqnarray*}
\|\Delta \mu_h\|_{-1,h} \|\nabla v_h\|_{0,\Omega} = \sup_{w_h \in S_{h,0}^k}\frac{
\int_{\Omega} \nabla w_h\cdot \nabla\mu_h\,dx }{\|\nabla w_h\|_{0,\Omega}} \|\nabla v_h\|_{0,\Omega}\geq 
\int_{\Omega} \nabla v_h\cdot \nabla\mu_h\,dx\,, 
 \end{eqnarray*}
whereas for the clamped case with $v_h \in  S_{h,0}^{k+1}$ we have 
\begin{eqnarray*}
\|\Delta \mu_h\|_{-1,h*} \|\nabla v_h\|_{0,\Omega} = \sup_{w_h \in S_{h,0}^{k+1}}\frac{
\int_{\Omega} \nabla w_h\cdot \nabla\mu_h\,dx }{\|\nabla w_h\|_{0,\Omega}} \|\nabla v_h\|_{0,\Omega}\geq 
\int_{\Omega} \nabla v_h\cdot \nabla\mu_h\,dx\,. 
 \end{eqnarray*}
The continuity of $b(\cdot,\cdot)$ follows by writing
\begin{align*}
|b((v_h,\psi_h),\mu_h)| = 
\bigg|\langle\psi_h, \mu_h \rangle - 
\langle v_h,\Delta \mu_h\rangle \bigg|   \leq{}& \bigg|\int_{\Omega} \psi_h\,\mu_h\,dx
+ \int_{\Omega} \nabla v_h\cdot \nabla\mu_h\,dx \bigg|\\
\leq{}& \|(v_h,\psi_h)\|_{V} \|\mu_h\|_{h}. 
\end{align*}
This establishes the first condition.
For the second and third conditions, we now must consider the boundary conditions
separately. 

{\bf Simply supported boundary conditions.}
For $(u_h, \phi_h) \in V^k_h $ satisfying 
\[
b((u_h,\phi_h),\mu_h) = 0, \quad \mu_h \in \Mb^k_h,
\]
since $\Mb^k_h=S_{h,0}^k$, we can take $\mu_h=u_h$ to obtain
\[ \int_{\Omega} \nabla u_h \cdot \nabla u_h \,dx = - \int_{\Omega} \phi_h u_h\,dx.\]
Hence, using the Cauchy--Schwarz and Poincar\'e inequalities we obtain 
\[ \|\nabla u_h \|^2_{0,\Omega}  \leq C_1 \|\phi_h\|_{0,\Omega} \|\nabla u_h\|_{0,\Omega}.\]
The coercivity then follows exactly as in the continuous case:
\[ \|\nabla u_h \|^2_{0,\Omega}+  \|\nabla\phi_h\|^2_{0,\Omega}
\leq  C a((u_h,\phi_h),(u_h,\phi_h)),\quad 
(u_h,\phi_h) \in \CV^k_h.
\]

For the inf--sup condition we set $\psi_h =0$  as in the continuous setting to obtain 
\[  \sup_{(v_h, \psi_h)  \in V^k_h} \frac{b((v_h,\psi_h),\mu_h)} 
{ \|(v_h,\psi_h)\|_{V} } {\ge}\sup_{v_h\in S^k_{h,0}}
\frac{ \langle v_h,\Delta \mu_h\rangle} 
{ \|\nabla v_h\|_{0,\Omega} } \geq \|\Delta \mu_h\|_{-1,h},\]
and setting $v_h=0$ to find 
\[  \sup_{(v_h, \psi_h)  \in V^k_h} \frac{b((v_h,\psi_h),\mu_h)} 
{ \|(v_h,\psi_h)\|_{V} }  {\ge}\sup_{ \psi_h  \in S^k_{h,0}} 
\frac{\langle\psi_h, \mu_h \rangle } { \|\nabla \psi_h\|_{0,\Omega} } \geq \|\mu_h\|_{-1,h}. \]
Thus \[  \sup_{(v_h, \psi_h)  \in V^k_h} \frac{b((v_h,\psi_h),\mu_h)} 
{ \|(v_h,\psi_h)\|_{V} } 
\geq  \tilde  \beta \|\mu_h\|_{h}.\]

{\bf Clamped boundary conditions.}
Recalling the stabilisation term in $a_h(\cdot,\cdot)$, we use the Poincar\'e inequality for $u_h \in S^{k+1}_{h,0}$
and the definition \eqref{def.Deltah} of $\Delta_h$ to find
\begin{align*}
	\|\nabla u_h \|_{0,\Omega}  = \sup_{v_h \in S^{k+1}_{h,0}} \frac{\int_{\Omega} \nabla u_h \cdot \nabla v_h\, dx}
{\|\nabla v_h\|_{0,\Omega}} \leq{}&  C \sup_{v_h \in S^{k+1}_{h,0}} \frac{\int_{\Omega} \nabla u_h \cdot \nabla v_h\, dx}
{\| v_h\|_{0,\Omega}}\\
={}&C \sup_{v_h \in S^{k+1}_{h,0}} \frac{-\int_{\Omega} \Delta_h u_h\, v_h\, dx}
{\| v_h\|_{0,\Omega}} \leq  C\|\Delta_h u_h\|_{0,\Omega}.
\end{align*}
Hence, using Poincar\'e inequality again, there exists a positive constant $C$ such that,
for all $\phi_h \in S_{h,0}^k$,
\[
\|\nabla u_h \|^2_{0,\Omega}  \leq C \left(\|\phi_h - \Delta_h u_h\|^2_{0,\Omega} + \|\phi_h\|^2_{0,\Omega}\right)
\leq  C \left(\|\phi_h - \Delta_h u_h\|^2_{0,\Omega} + \|\nabla \phi_h\|^2_{0,\Omega}\right).
\]
Thus we have the coercivity of the modified bilinear form $a_h(\cdot,\cdot)$ on $S_{h,0}^{k+1} \times S_{h,0}^k$ 
and, hence, on the discrete kernel space $\CV_h^k \subset S_{h,0}^{k+1} \times S_{h,0}^k$ 
with respect to the standard norm of $V$.
The inf--sup condition now follows as in the case of simply supported boundary
conditions, with $S_{h,0}^{k+1}$ instead of $S_{h,0}^k$ for $v_h$, which
accounts for $\|\cdot\|_{-1,h*}$ used in the definition of the norm on $\Mb_h^k$.
This finishes the proof of the theorem.
\end{proof}

\section{\emph{A priori} error estimates}\label{sec:error.est}
In this section we investigate \emph{A priori} error estimates for our problems.

\subsection{\emph{A priori} error estimate for simply supported boundary conditions}
Our goal is to establish the following theorem.
\begin{theorem}\label{th0}
Let $(u,\phi,\lambda)$ be the solution of the saddle-point problem 
\eqref{wbiharm}, and $(u_h,\phi_h,\lambda_h)$ the solution of \eqref{weakh},
both with simply supported boundary conditions.
We assume that $u,\phi\in H^{k+1}(\Omega)$ and 
$\lambda\in H^k(\Omega)$.
 Then 
\begin{equation} \label{eq:strang1}
\|(u-u_h,\phi-\phi_h)\|_{V} \leq 
C  h^{k} \left(\|u\|_{k+1,\Omega} + \|\phi\|_{k+1,\Omega}+ |\lambda|_{k,\Omega}\right).
\end{equation}
\end{theorem}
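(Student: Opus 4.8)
The plan is to follow the standard Babu\v{s}ka--Brezzi theory for mixed finite element methods, using the abstract error estimate for saddle-point problems whose ingredients (uniform continuity, kernel coercivity, discrete inf--sup) were already established in the proof of Theorem~\ref{TMexistdiscrete}. Since in the simply supported case $a_h=a$, $V^k_h\subset V$, and the bilinear form $b(\cdot,\cdot)$ is the \emph{same} at the discrete level as at the continuous level, this is a conforming discretisation and no consistency error arises. The first step is therefore to recall the C\'ea-type quasi-optimality estimate
\begin{equation*}
\|(u-u_h,\phi-\phi_h)\|_V + \|\lambda-\lambda_h\|_h
\le C\Big(\inf_{(v_h,\psi_h)\in V^k_h}\|(u-v_h,\phi-\psi_h)\|_V + \inf_{\mu_h\in \Mb^k_h}\|\lambda-\mu_h\|_h\Big),
\end{equation*}
which holds because of the three properties verified in the proof of Theorem~\ref{TMexistdiscrete}, together with the standard argument that the constraint $b((u_h,\phi_h),\mu_h)=0$ coincides with $b((u,\phi),\mu_h)=0$ (both equal $0$), so that the discrete solution lies in the same affine kernel structure as the continuous one tested against discrete multipliers.

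The second step is to bound each infimum by the approximation properties of the finite element spaces. For the $V$-component, since $V^k_h=S^k_{h,0}\times S^k_{h,0}$ and $u,\phi\in H^{k+1}(\Omega)\cap H^1_0(\Omega)$, the estimate \eqref{approx0} gives
\begin{equation*}
\inf_{(v_h,\psi_h)\in V^k_h}\|(u-v_h,\phi-\psi_h)\|_V \le C h^k\big(\|u\|_{k+1,\Omega}+\|\phi\|_{k+1,\Omega}\big),
\end{equation*}
after recalling that $\|\cdot\|_V$ controls only the $H^1$-seminorms of the two components, hence picks up the factor $h^k$ rather than $h^{k+1}$. For the multiplier component, I would take $\mu_h$ to be the $H^1$-orthogonal projection of $\lambda$ onto $\Mb^k_h=S^k_{h,0}$; property [P3] / \eqref{approx1} then gives $\|\lambda-\mu_h\|_{0,\Omega}\le Ch^k|\lambda|_{k,\Omega}$. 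The one genuinely fiddly point is to convert this $L^2$-bound into a bound on $\|\lambda-\mu_h\|_h=(\|\lambda-\mu_h\|_{-1,h}^2+\|\Delta(\lambda-\mu_h)\|_{-1,h}^2)^{1/2}$: for the first term one uses $\|\cdot\|_{-1,h}\le C\|\cdot\|_{0,\Omega}$ (duality pairing bounded by $L^2$ norm times Poincar\'e), and for the second term, since $\mu_h$ is the $H^1$-projection, $\|\Delta(\lambda-\mu_h)\|_{-1,h}=\sup_{w_h\in S^k_{h,0}}\frac{(\nabla(\lambda-\mu_h),\nabla w_h)_{0,\Omega}}{\|\nabla w_h\|_{0,\Omega}}=0$ — the $H^1$-projection kills exactly this seminorm. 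Hence $\|\lambda-\mu_h\|_h\le Ch^k|\lambda|_{k,\Omega}$.

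Combining these two steps and dropping the (nonnegative) multiplier error from the left-hand side yields \eqref{eq:strang1}. The main obstacle, such as it is, is purely bookkeeping: being careful that the mesh-dependent norm $\|\cdot\|_h$ on $\Mb^k_h$ is compatible with the $L^2$-based approximation property [P3], and making sure the abstract mixed estimate is quoted with the right norms (the $V$-norm on $V^k_h$, the $\|\cdot\|_h$-norm on $\Mb^k_h$) so that the uniform inf--sup and kernel-coercivity constants from Theorem~\ref{TMexistdiscrete} actually apply. There is no regularity or consistency difficulty here because the simply supported case is fully conforming; the real work of that kind is deferred to the clamped case in Section~\ref{sec:convergence.clamped}.
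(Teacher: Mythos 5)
Your plan rests on the abstract Babu\v{s}ka--Brezzi quasi-optimality estimate with the best-approximation term taken over the whole discrete space $V^k_h$, and this is where the genuine gap lies. What actually follows from the properties verified in Theorem \ref{TMexistdiscrete} (coercivity of $a$ on the \emph{discrete kernel} plus the Galerkin-type identity) is the second Strang lemma in the form \eqref{eq:strang}, whose first term is $\inf_{(v_h,\psi_h)\in \CV^k_h}\|(u-v_h,\phi-\psi_h)\|_V$ --- an infimum over the constrained space $\CV^k_h$, not over $V^k_h$. Passing from $\CV^k_h$ to $V^k_h$ requires the standard ``distance to the discrete kernel is controlled by the distance to the discrete space'' lemma, and that lemma needs the uniform continuity of $b(\cdot,\cdot)$ on $(V+V^k_h)\times \Mb^k_h$ with respect to $\|\cdot\|_V\times\|\cdot\|_h$. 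The continuity established in the paper holds only for discrete first arguments: its proof uses that $v_h$ and $\psi_h$ are admissible test functions in the suprema defining $\|\cdot\|_{-1,h}$. Extending it to arbitrary $(v,\psi)\in V$ against the weak, $H^{-1}$-type discrete norm $\|\cdot\|_h$ is not automatic (it can be done, but only via quasi-interpolation and inverse estimates that you do not supply). Since $\Mb^k_h\subsetneq \M$ means $\CV^k_h\not\subset\CV$ --- which, incidentally, is also why your claim that ``no consistency error arises'' is misleading, even though your multiplier infimum correctly plays the role of the paper's Lemma \ref{lemma2} --- you cannot sidestep this: you must exhibit an explicit element of $\CV^k_h$ at distance $O(h^k)$ from $(u,\phi)$. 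That construction is the main technical content of the paper's proof (Lemmas \ref{lemma3} and \ref{lemma12}: set $\xi_h=\Pi_h^k\phi$, define $w_h$ through the discrete constraint, and prove $|u-w_h|_{1,\Omega}\le C\|\phi-\xi_h\|_{0,\Omega}+|u-R_h^ku|_{1,\Omega}$ via the Ritz projector), and your proposal omits it entirely.

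A secondary, fixable slip concerns the multiplier term: with $\mu_h$ the $H^1$-orthogonal projection of $\lambda$ (orthogonality for the full $H^1$ inner product, as in \eqref{estorth}), one has $\int_\Omega\nabla(\lambda-\mu_h)\cdot\nabla w_h\,dx=-\int_\Omega(\lambda-\mu_h)\,w_h\,dx$, which does not vanish; so $\|\Delta(\lambda-\mu_h)\|_{-1,h}\ne 0$ in general. The quantity is still $O(h^k|\lambda|_{k,\Omega})$ by [P3] and the Poincar\'e inequality, so your bound survives, but the assertion that the $H^1$-projection ``kills exactly this seminorm'' would be true only for the Ritz (energy) projection, which is not the projection [P3] refers to.
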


To prove this theorem we apply Strang's second lemma \cite{BS94}:
\begin{multline} \label{eq:strang}
\|(u-u_h,\phi-\phi_h)\|_{V} \\
\leq
C\left(\inf_{(v_h,\psi_h) \in \CV^k_h} \|(u-v_h,\phi-\psi_h)\|_{V}
+\sup_{(v_h,\psi_h) \in \CV^k_h} \frac{|a((u-u_h,\phi-\phi_h), (v_h,\psi_h))|}
{\|(v_h,\psi_h)\|_{V}}\right),
\end{multline}
where $(u,\phi)$ is the solution of \eqref{cmbiharm},
 and $(u_h,\phi_h)$ the solution of \eqref{weakh}
(recall that, here, $a_h=a$). 
The first term in the right side of \eqref{eq:strang} is the 
best approximation error and the second one stands for the consistency error. 
First we turn our attention to this latter term.
 
\begin{lemma}\label{lemma2} 
Let $(u,\phi,{\lambda})$ be the solution of the saddle-point problem 
\eqref{wbiharm} with simply supported boundary conditions.  Then, if $\lambda \in H^k(\Omega)$, we have 
\[ \sup_{(v_h,\psi_h) \in \CV^k_h} \frac{|a((u-u_h,\phi-\phi_h), (v_h,\psi_h))|}
{\|(v_h,\psi_h)\|_{V}} \leq C h^k |\lambda|_{k,\Omega}. \]
\end{lemma}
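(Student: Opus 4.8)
The plan is to exploit the consistency of the discrete scheme with the continuous saddle-point problem against discrete-kernel test functions. Since $(u,\phi,\lambda)$ solves \eqref{wbiharm} and $(v_h,\psi_h)\in\CV^k_h\subset V$, the first equation of \eqref{wbiharm} gives
\[
a((u,\phi),(v_h,\psi_h)) = \ell(v_h) - b((v_h,\psi_h),\lambda),
\]
while the discrete equation \eqref{weakh} (with $a_h=a$ here) gives, because $(v_h,\psi_h)$ lies in the discrete kernel so $b((v_h,\psi_h),\lambda_h)=0$,
\[
a((u_h,\phi_h),(v_h,\psi_h)) = \ell(v_h).
\]
Subtracting, $a((u-u_h,\phi-\phi_h),(v_h,\psi_h)) = -b((v_h,\psi_h),\lambda)$. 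The crucial point is then that $b((v_h,\psi_h),\mu_h)=0$ for every $\mu_h\in\Mb^k_h$, so I may subtract any such $\mu_h$ and write $a((u-u_h,\phi-\phi_h),(v_h,\psi_h)) = -b((v_h,\psi_h),\lambda-\mu_h)$.

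**Next I would** bound $b((v_h,\psi_h),\lambda-\mu_h)$ by the continuity estimate for $b$ already established in the proof of Theorem~\ref{TMexistdiscrete}. Recalling $b((v_h,\psi_h),\mu) = \langle\psi_h,\mu\rangle - \langle v_h,\Delta\mu\rangle$ and that, for $\lambda-\mu_h\in H^1_0(\Omega)\subset\M$, the pairing $\langle v_h,\Delta(\lambda-\mu_h)\rangle$ equals $-\int_\Omega\nabla v_h\cdot\nabla(\lambda-\mu_h)\,dx$ after integration by parts, I get
\[
|b((v_h,\psi_h),\lambda-\mu_h)| \le \|\psi_h\|_{0,\Omega}\|\lambda-\mu_h\|_{0,\Omega} + \|\nabla v_h\|_{0,\Omega}\|\nabla(\lambda-\mu_h)\|_{0,\Omega} \le C\|(v_h,\psi_h)\|_V\,\|\lambda-\mu_h\|_{1,\Omega}.
\]
Dividing by $\|(v_h,\psi_h)\|_V$ and taking the supremum over $\CV^k_h$ leaves $C\|\lambda-\mu_h\|_{1,\Omega}$.

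**Finally**, I choose $\mu_h$ to be the $H^1$-orthogonal projection of $\lambda$ onto $\Mb^k_h=S^k_{h,0}$. Because $\lambda=0$ on $\partial\Omega$ (this is where the simply supported boundary condition $\lambda\in H^1_0(\Omega)$ is used), property [P3] together with the standard $H^1$-approximation estimate \eqref{approx0} for $S^k_{h,0}$ gives $\|\lambda-\mu_h\|_{1,\Omega}\le C h^{k-1}|\lambda|_{k,\Omega}$... here I must be slightly careful: the target bound is $C h^k|\lambda|_{k,\Omega}$, so one actually needs the $H^1$-error of the projection to be $O(h^k)$ while also noting that the $\psi_h$-term carries an $L^2$-error which is $O(h^k)$ by [P3]. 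More precisely, splitting $b$ into its two pieces, the $\langle\psi_h,\lambda-\mu_h\rangle$ term is controlled by $\|\psi_h\|_{0,\Omega}\|\lambda-\mu_h\|_{0,\Omega}\le C\|(v_h,\psi_h)\|_V\,h^k|\lambda|_{k,\Omega}$ via [P3], and the $\langle v_h,\Delta(\lambda-\mu_h)\rangle = -\int\nabla v_h\cdot\nabla(\lambda-\mu_h)$ term is $\le\|\nabla v_h\|_{0,\Omega}\|\nabla(\lambda-\mu_h)\|_{0,\Omega}$, where $\|\nabla(\lambda-\mu_h)\|_{0,\Omega}\le C h^{k-1}|\lambda|_{k,\Omega}$ — which is the wrong power.

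**The main obstacle**, therefore, is obtaining the $O(h^k)$ rate on the gradient term rather than the naive $O(h^{k-1})$. The resolution is not to estimate $\nabla v_h\cdot\nabla(\lambda-\mu_h)$ crudely, but to integrate by parts back: $-\int_\Omega\nabla v_h\cdot\nabla(\lambda-\mu_h)\,dx = \int_\Omega \Delta_h v_h\,(\lambda-\mu_h)\,dx$ is not available since $v_h$ is only in $S^k_{h,0}$ — instead one uses that $\mu_h$ is the $H^1$-\emph{projection}, so $\int_\Omega\nabla v_h\cdot\nabla(\lambda-\mu_h)\,dx = 0$ for all $v_h\in S^k_{h,0}$, and this term vanishes identically. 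Hence
\[
|b((v_h,\psi_h),\lambda-\mu_h)| = |\langle\psi_h,\lambda-\mu_h\rangle| \le \|\psi_h\|_{0,\Omega}\,\|\lambda-\mu_h\|_{0,\Omega} \le C\,\|(v_h,\psi_h)\|_V\,h^k|\lambda|_{k,\Omega},
\]
using [P3] in the last step, and dividing and taking the supremum finishes the proof. I would make sure to state explicitly at the outset that $\mu_h$ denotes the $H^1$-orthogonal projection onto $S^k_{h,0}$, since that orthogonality is precisely what kills the problematic gradient term and is also exactly the hypothesis under which [P3] is stated.
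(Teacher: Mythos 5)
Your overall strategy is the same as the paper's: use the first equation of \eqref{wbiharm} together with $(v_h,\psi_h)\in\CV^k_h$ to reduce the consistency term to $|b((v_h,\psi_h),\lambda-\mu_h)|$ for a suitable $\mu_h\in\Mb^k_h$, then choose $\mu_h$ as a projection of $\lambda$ so that both resulting terms are $O(h^k)$. The one step that does not hold as written is your final claim that, for the $H^1$-orthogonal projection $\mu_h$ of $\lambda$ onto $S^k_{h,0}$, one has $\int_\Omega\nabla v_h\cdot\nabla(\lambda-\mu_h)\,dx=0$ for all $v_h\in S^k_{h,0}$. That identity is the defining property of the Ritz (elliptic) projector of Lemma~\ref{lemma3}, i.e.\ of the projection with respect to the gradient inner product only; it is not the projection to which [P3] refers, and it is not what the paper uses. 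For the full $H^1$ inner product the orthogonality reads
\[
\int_\Omega\nabla v_h\cdot\nabla(\lambda-\mu_h)\,dx+\int_\Omega v_h\,(\lambda-\mu_h)\,dx=0,
\]
which is exactly \eqref{estorth}: the gradient term does not vanish, but it converts into $-\int_\Omega v_h(\lambda-\mu_h)\,dx$, an $L^2$ term that is then bounded by $\|v_h\|_{0,\Omega}\|\lambda-\mu_h\|_{0,\Omega}\le C\|\nabla v_h\|_{0,\Omega}\,h^k|\lambda|_{k,\Omega}$ using the Poincar\'e inequality and [P3] --- the same bound you obtain for the $\psi_h$ term.

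So the conclusion is unaffected, but you must pick one of two consistent repairs: (i) keep the full $H^1$ projection, for which [P3] is stated, and replace ``the gradient term vanishes'' by ``the gradient term equals $-\int_\Omega v_h(\lambda-\mu_h)\,dx$'' (this is the paper's route); or (ii) genuinely use the Ritz projector, in which case the gradient term does vanish, but [P3] as stated no longer applies and you need a separate $L^2$ estimate $\|\lambda-R_h^k\lambda\|_{0,\Omega}\le Ch^k|\lambda|_{k,\Omega}$ for the Ritz projection (standard via Aubin--Nitsche duality on a convex domain, but an extra ingredient the paper does not invoke here). Your earlier ``crude'' estimate and the worry about losing a power of $h$ on the gradient term are exactly the right diagnosis; only the repair is slightly off.
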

\begin{proof}
From the first equation of  \eqref{wbiharm} we  get
 $a((u-u_h,\phi-\phi_h),(v_h,\psi_h))+ b((v_h,\psi_h),\lambda)=0$ for all 
$(v_h,\psi_h) \in \CV^k_h$. Hence, 
\begin{eqnarray*} 
\sup_{(v_h,\psi_h) \in \CV^k_h} \frac{|a((u-u_h,\phi-\phi_h), (v_h,\psi_h))|}
{\|(v_h,\psi_h)\|_{V}} =
\sup_{(v_h,\psi_h) \in \CV^k_h} \frac{|b( (v_h,\psi_h),\lambda)|}
{\|(v_h,\psi_h)\|_{V}}.
\end{eqnarray*}
Denoting the projection of $\lambda$ onto $\Mb^k_h=S_{h,0}^k$ with respect to the
$H^1$-inner product by {$\tilde\lambda_h$}, we have 
\begin{equation}\label{estorth}
 \int_{\Omega} \nabla v_h \cdot \nabla (\lambda-\tilde \lambda_h)\,dx = 
-\int_{\Omega} v_h (\lambda-\tilde \lambda_h)\,dx.
\end{equation}
As $(v_h,\psi_h) \in \CV^k_h$, using \eqref{estorth},
\[ b((v_h,\psi_h),\lambda)= b((v_h,\psi_h),\lambda-\tilde \lambda_h)= 
-\int_{\Omega} v_h (\lambda-\tilde \lambda_h)\,dx + 
\int_{\Omega} \psi_h (\lambda-\tilde \lambda_h)\,dx,
\]
and [P3] thus yields
\[ |b((v_h,\psi_h),\lambda)|\leq C h^k |\lambda|_{k,\Omega}\,\|(v_h, \psi_h)\|_{V}.\]
Thus 
\begin{eqnarray*}
\sup_{(v_h,\psi_h) \in \CV^k_h} \frac{|a((u-u_h,\phi-\phi_h), (v_h,\psi_h))|}
{\|(v_h,\psi_h)\|_{V}} 
&\leq& C h^k |\lambda|_{k,\Omega}.
\end{eqnarray*}
\end{proof}

We now prove the following lemma, which is 
similar  to \cite[Proposition~3]{DP01}. See also \cite{Lam11a}.

\begin{lemma}\label{lemma3}
Let $(w_h,\xi_h) \in \CV^k_h$, $(w,\xi)\in  \CV$, and 
$R_h^k:H_0^1(\Omega)\rightarrow S^k_{h,0}$ be 
the Ritz projector (also called ``elliptic projector'') defined as 
\[\int_{\Omega}\nabla (R_h^k w-w)\cdot\nabla v_h\,dx=0,\; v_h \in S^k_{h,0}.
\] 
 Then 
\[ |w-w_h|_{1,\Omega} \leq C \|\xi-\xi_h\|_{0,\Omega}+|R_h^kw-w|_{1,\Omega}. \]
\end{lemma}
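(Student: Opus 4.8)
The plan is to exploit the kernel constraints satisfied by $(w_h,\xi_h)$ and $(w,\xi)$ to rewrite the quantity $|w-w_h|_{1,\Omega}$ in terms of an inner product against a test function in $S^k_{h,0}$, and then to insert the Ritz projector $R_h^k w$ as an intermediate term. First I would note that $R_h^k w - w_h \in S^k_{h,0} = \Mb^k_h$, so it is a legitimate test function in the constraint $b((w_h,\xi_h),\mu_h)=0$. Writing out $b$, this gives $\langle \xi_h, R_h^k w - w_h\rangle - \langle w_h, \Delta(R_h^k w - w_h)\rangle = 0$, i.e.
\[
\int_\Omega \xi_h\,(R_h^k w - w_h)\,dx + \int_\Omega \nabla w_h\cdot\nabla(R_h^k w - w_h)\,dx = 0.
\]
On the continuous side, since $(w,\xi)\in\CV$ and $R_h^k w - w_h \in H^1_0(\Omega)\subset\M$, the same constraint yields
\[
\int_\Omega \xi\,(R_h^k w - w_h)\,dx + \int_\Omega \nabla w\cdot\nabla(R_h^k w - w_h)\,dx = 0.
\]

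Subtracting these two identities, the middle step is to replace $\nabla w$ by $\nabla R_h^k w$ in the second integral using the defining property of the Ritz projector: since $R_h^k w - w_h \in S^k_{h,0}$, we have $\int_\Omega \nabla(R_h^k w - w)\cdot\nabla(R_h^k w - w_h)\,dx = 0$. Hence
\[
\int_\Omega \nabla(R_h^k w - w_h)\cdot\nabla(R_h^k w - w_h)\,dx
= \int_\Omega (\xi_h - \xi)\,(R_h^k w - w_h)\,dx,
\]
so that $|R_h^k w - w_h|_{1,\Omega}^2 \leq \|\xi-\xi_h\|_{0,\Omega}\,\|R_h^k w - w_h\|_{0,\Omega}$. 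Then I would apply the Poincar\'e inequality, $\|R_h^k w - w_h\|_{0,\Omega}\leq C |R_h^k w - w_h|_{1,\Omega}$, and divide through to obtain $|R_h^k w - w_h|_{1,\Omega} \leq C\|\xi-\xi_h\|_{0,\Omega}$. Finally, the triangle inequality
\[
|w - w_h|_{1,\Omega} \leq |w - R_h^k w|_{1,\Omega} + |R_h^k w - w_h|_{1,\Omega}
\leq |R_h^k w - w|_{1,\Omega} + C\|\xi-\xi_h\|_{0,\Omega}
\]
gives the claimed bound (possibly with a relabelled constant $C$ in front of $|R_h^kw-w|_{1,\Omega}$, or in the form stated).

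I expect the only delicate point to be checking that $R_h^k w - w_h$ is indeed an admissible test function in both the discrete constraint (trivially, as it lies in $S^k_{h,0}=\Mb^k_h$) and the continuous constraint (because $H^1_0(\Omega)\subset\M$ in the simply supported case, where $\M = H^1_0(\Omega)$, so this is immediate). The rest is a routine combination of the Ritz orthogonality, Cauchy--Schwarz, and Poincar\'e. There is no genuine obstacle here; the content is entirely in choosing the right test function and using Ritz orthogonality to eliminate the troublesome $\nabla w$ term.
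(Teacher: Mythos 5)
Your proposal is correct and follows essentially the same route as the paper's own proof: both derive the orthogonality relation $\int_{\Omega}\nabla (R_h^kw-w_h)\cdot\nabla q_h+(\xi-\xi_h)\,q_h\,dx=0$ from the continuous and discrete kernel constraints together with the Ritz orthogonality, test with $q_h = R_h^k w - w_h$, and conclude via Cauchy--Schwarz, Poincar\'e, and the triangle inequality. The only difference is presentational: the paper states the identity for all $q_h \in S^k_{h,0}$ before specialising, whereas you test directly with $R_h^k w - w_h$ from the outset.
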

\begin{proof}
Here we have \begin{eqnarray*}
\int_{\Omega}\nabla w\cdot\nabla q+\xi\,q\,dx=0,\; q\in H^1_0(\Omega),\;\text{and}\;
\int_{\Omega}\nabla w_h\cdot\nabla q_h+\xi_h\,q_h\,dx=0,\; q_h\in S^k_{h,0},
\end{eqnarray*} 
since $(w_h,\xi_h) \in \CV^k_h$ and $(w,\xi)\in  \CV$.
Thus, since $S_{h,0}^k\subset H^1_0(\Omega)$,
\begin{equation}\label{eq2}
\int_{\Omega}\nabla (w-w_h)\cdot\nabla q_h+(\xi-\xi_h)\,q_h\,dx=0,\; 
q_h\in S^k_{h,0}.
\end{equation}
In terms of the Ritz projector $R_h^k$, \eqref{eq2} is written as 
\begin{equation}\label{eq3}
\int_{\Omega}\nabla (R_h^kw-w_h)\cdot\nabla q_h+(\xi-\xi_h)\,q_h\,dx=0,\; 
q_h\in S^k_{h,0}.
\end{equation}
Taking $q_h = R_h^kw-w_h$ in equation \eqref{eq3} 
and using the Cauchy--Schwarz and Poincar\'e inequalities, we obtain 
\[
|R_h^k w - w_h|^2_{1,\Omega}\leq \|\xi-\xi_h\|_{0,\Omega}\|R_h^k w - w_h\|_{0,\Omega}
\leq C\|\xi-\xi_h\|_{0,\Omega}|R_h^k w - w_h|_{1,\Omega},
\]
which yields $|R_h^k w - w_h|_{1,\Omega}\leq C\|\xi-\xi_h\|_{0,\Omega}$.
The final result follows from the triangle inequality 
\begin{equation*}
|w-w_h|_{1,\Omega}\leq |R_h^k w - w_h|_{1,\Omega}+|w-R_h^kw|_{1,\Omega} \leq 
  C\|\xi-\xi_h\|_{0,\Omega}+ |w-R_h^kw|_{1,\Omega}.
\end{equation*}
\end{proof}

The following lemma estimates the best approximation error in \eqref{eq:strang}, and concludes
the proof of Theorem \ref{th0}.

\begin{lemma}\label{lemma12}
For any $(u,\phi) \in \CV\cap (H^{k+1}(\Omega)\times H^{k+1}(\Omega))$, there exists $(w_h,\psi_h) \in \CV^k_h$ such that
\begin{equation}\label{eq0}
\|(u-w_h,\phi-\xi_h)\|_{V} \leq  Ch^k\left(  \|u\|_{k+1,\Omega} +  \|\phi\|_{k+1,\Omega} \right)
\end{equation}
\end{lemma}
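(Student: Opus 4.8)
The plan is to build the approximation $(w_h,\xi_h)$ in two stages: first choose a good approximation $\xi_h$ of $\phi$ in $S^k_{h,0}$, then construct $w_h$ so that the pair $(w_h,\xi_h)$ lies exactly in the discrete kernel $\CV^k_h$. The constraint defining $\CV^k_h$ is $\int_\Omega \nabla w_h\cdot\nabla\mu_h\,dx = \int_\Omega \xi_h\mu_h\,dx$ for all $\mu_h\in S^k_{h,0}$; reading this as a discrete Poisson problem, for \emph{any} choice of $\xi_h\in S^k_{h,0}$ there is a unique $w_h\in S^k_{h,0}$ solving it (this is exactly the discrete problem whose well-posedness follows from coercivity and the Lax--Milgram theorem on $S^k_{h,0}$). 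So the freedom is entirely in the choice of $\xi_h$, and the task reduces to: (i) pick $\xi_h$ approximating $\phi$ to order $h^k$ in $H^1$, and (ii) show the resulting $w_h$ approximates $u$ to the same order.

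For step (i) I would simply take $\xi_h = R_h^k\phi$, the Ritz projector of $\phi$; since $\phi\in H^1_0(\Omega)\cap H^{k+1}(\Omega)$, standard finite element approximation theory (the estimate \eqref{approx0} combined with Céa's lemma for the Ritz projector) gives $|\phi-\xi_h|_{1,\Omega}\leq Ch^k\|\phi\|_{k+1,\Omega}$, and also $\|\phi-\xi_h\|_{0,\Omega}\leq Ch^{k+1}\|\phi\|_{k+1,\Omega}$ by Aubin--Nitsche. This controls the $\phi$-component of the norm in \eqref{eq0}. For step (ii), the key observation is that the pair $(u,\phi)\in\CV$ satisfies the \emph{continuous} constraint $\int_\Omega\nabla u\cdot\nabla q\,dx = \int_\Omega\phi q\,dx$ for all $q\in H^1_0(\Omega)$, so $u$ is the weak solution of $-\Delta u = \phi$. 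Now I am exactly in the situation of Lemma~\ref{lemma3}, applied with $(w,\xi)=(u,\phi)$ and $(w_h,\xi_h)$ the pair just constructed: that lemma gives
\[
|u-w_h|_{1,\Omega}\leq C\|\phi-\xi_h\|_{0,\Omega}+|R_h^ku-u|_{1,\Omega}.
\]
The first term on the right is $O(h^{k+1})\|\phi\|_{k+1,\Omega}$ by step (i), and the second is $O(h^k)\|u\|_{k+1,\Omega}$ by the Ritz projection error estimate again (using $u\in H^{k+1}(\Omega)$).

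Combining, $\|(u-w_h,\phi-\xi_h)\|_V^2 = |u-w_h|_{1,\Omega}^2 + |\phi-\xi_h|_{1,\Omega}^2 \leq Ch^{2k}(\|u\|_{k+1,\Omega}+\|\phi\|_{k+1,\Omega})^2$, which is \eqref{eq0}. I do not expect a genuine obstacle here; the only point requiring a little care is verifying that the $w_h$ produced by solving the discrete constraint with data $\xi_h$ is indeed the same object that Lemma~\ref{lemma3} takes as hypothesis — i.e., that $(w_h,\xi_h)\in\CV^k_h$ — which holds by construction, so the lemma applies verbatim. The mild subtlety worth spelling out is that Lemma~\ref{lemma3} does not assert $w_h$ approximates $w$ on its own; it only becomes an $O(h^k)$ estimate once we feed in the improved $O(h^{k+1})$ bound on $\|\phi-\xi_h\|_{0,\Omega}$ coming from the Aubin--Nitsche duality argument, so that step should be stated explicitly rather than absorbed into a generic ``approximation property'' citation.
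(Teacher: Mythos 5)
Your proof is correct and follows essentially the same route as the paper: fix a discrete approximation $\xi_h$ of $\phi$ in $S^k_{h,0}$, solve the discrete constraint for $w_h$ so that $(w_h,\xi_h)\in\CV^k_h$, and conclude via Lemma~\ref{lemma3}; the only difference is that you take $\xi_h=R_h^k\phi$ where the paper takes the $L^2$-projection $\Pi_h^k\phi$, and either choice works. One small correction: the Aubin--Nitsche $O(h^{k+1})$ bound on $\|\phi-\xi_h\|_{0,\Omega}$, which you flag as essential, is not actually needed (and would require $H^2$-regularity of the Poisson problem, i.e.\ convexity of $\Omega$, which the paper does not assume) --- since $\phi-\xi_h\in H^1_0(\Omega)$, the crude bound $\|\phi-\xi_h\|_{0,\Omega}\le C\,|\phi-\xi_h|_{1,\Omega}\le Ch^k\|\phi\|_{k+1,\Omega}$ from the Poincar\'e inequality already suffices in Lemma~\ref{lemma3} to reach the stated $O(h^k)$ rate.
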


\begin{proof}
Let $\Pi_h^k:L^2(\Omega) \to S^k_{h,0}$ be the orthogonal projector onto $S^k_{h,0}$. 
Let $(w_h,\xi_h) \in V^k_h$ be defined as 
\begin{eqnarray*}
\int_{\Omega} (\phi -\xi_h)\,q_h \,dx = 0,\; q_h \in S^k_{h,0},\;\text{and}\;
\int_{\Omega} \nabla w_h\cdot\nabla q_h + \xi_h\,q_h\,dx = 0,\; q_h \in S^k_{h,0}.
\end{eqnarray*}
Hence, $(w_h,\xi_h) \in \CV^k_h$ with $\xi_h =\Pi_h^k \phi$.
Moreover, since $\Pi_h^k$ is 
the $L^2$-projector onto $S^k_{h,0}$ we have \cite{Bra01}
\[
|\phi - \xi_h|_{1,\Omega} 
\leq C h^k |\phi|_{k+1,\Omega}.
\]
We note that the Ritz projector $R_h^k$ as defined in Lemma \ref{lemma3}
has the approximation property \cite{Tho97} \[
|u-R_h^k u|_{1,\Omega}\leq C h^k |u|_{k+1,\Omega}.\]
Hence, using the result of Lemma \ref{lemma3} 
we obtain 
\[|u-w_h|_{1,\Omega}\leq \|\phi - \xi_h\|_{0,\Omega}+ |u-R_h^ku|_{1,\Omega}
\leq C h^k( |u|_{k+1,\Omega}+ |\phi|_{k+1,\Omega}).
\]
\end{proof}

\subsection{\emph{A priori} error estimates for clamped boundary conditions}\label{sec:convergence.clamped}

The error estimates for clamped boundary conditions are established in the following mesh-dependent
semi-norm: for $(u,\phi)\in V+V_h^k$,
\begin{equation}\label{norm}
\snorm{(u, \phi)}{k,h}  = \sqrt{ \|\nabla \phi\|^2_{0,\Omega}  + \|\phi  -\Delta_h u \|^2_{0,\Omega} }.
\end{equation}
The reason for introducing this semi-norm is that, as already noticed in the proof of Theorem \ref{TMexistdiscrete}, 
the stabilisation term in $a_h(\cdot,\cdot)$ is not uniformly continuous on $V_h^k$ for the norm of $V$.
On the contrary, $a_h(\cdot,\cdot)$ is uniformly continuous for $\snorm{\cdot}{k,h}$, which enables the usage
of the second Strang Lemma.

Our goal here is to establish the following \emph{A priori} estimate.

\begin{theorem}\label{th1}
Let $(u,\phi,\lambda)$ be the solution of the saddle-point problem 
\eqref{wbiharm}, and $(u_h,\phi_h,\lambda_h)$ the solution of \eqref{weakh}, both with
clamped boundary conditions.
We assume that $u\in W^{k+1,p}(\Omega)$ for some $p\ge 2$, $\phi \in H^{k+1}(\Omega)$ and that
$\lambda\in H^k(\Omega)$. 
We have  
\begin{equation} \label{eq:strangn2}
\snorm{(u-u_h,\phi-\phi_h)}{k,h}
\leq 
C \left( h^{k} \|u\|_{k+1,\Omega} + 
h^{k-\frac{1}{2} - \frac{1}{p}} \|u\|_{k+1,p,\Omega}+h^k\|\phi\|_{k+1,\Omega}+ h^k |\lambda|_{k,\Omega}\right).
\end{equation}
\end{theorem}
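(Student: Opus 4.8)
The plan is to mimic the proof of Theorem~\ref{th0}, but working with the mesh-dependent semi-norm $\snorm{\cdot}{k,h}$ instead of the $V$-norm, because the stabilised form $a_h$ is only uniformly continuous and uniformly coercive for $\snorm{\cdot}{k,h}$. First I would observe that $\snorm{(v_h,\psi_h)}{k,h}^2 = a_h((v_h,\psi_h),(v_h,\psi_h))$ for $(v_h,\psi_h)\in V_h^k$, so $a_h$ is trivially uniformly continuous and coercive in this semi-norm on $V_h^k$; moreover $\snorm{\cdot}{k,h}$ is genuinely a norm on the discrete kernel $\CV_h^k$ by the coercivity argument already carried out in the proof of Theorem~\ref{TMexistdiscrete} (the Poincar\'e-type bound $\|\nabla u_h\|_{0,\Omega}\le C\|\Delta_h u_h\|_{0,\Omega}$ on $S^{k+1}_{h,0}$). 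The second Strang lemma then gives
\begin{equation*}
\snorm{(u-u_h,\phi-\phi_h)}{k,h}
\le C\left(\inf_{(v_h,\psi_h)\in\CV^k_h}\snorm{(u-v_h,\phi-\psi_h)}{k,h}
+\sup_{(v_h,\psi_h)\in\CV^k_h}\frac{|a_h((u,\phi),(v_h,\psi_h))-\ell(v_h)|}{\snorm{(v_h,\psi_h)}{k,h}}\right),
\end{equation*}
where one must be careful that the consistency defect now involves $a_h$ evaluated at the exact solution, not $a$.

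For the consistency term, I would write $a_h((u,\phi),(v_h,\psi_h)) = a((u,\phi),(v_h,\psi_h)) + \int_\Omega(\phi-\Delta_h u)(\psi_h-\Delta_h v_h)\,dx$. The first piece is handled exactly as in Lemma~\ref{lemma2}: using the first equation of \eqref{wbiharm} and $(v_h,\psi_h)\in\CV_h^k$, it equals $|b((v_h,\psi_h),\lambda-\tilde\lambda_h)|$ where $\tilde\lambda_h$ is the $H^1$-projection of $\lambda$ onto $\Mb^k_h$, which by property [P3] is $\le Ch^k|\lambda|_{k,\Omega}\|(v_h,\psi_h)\|_V$; here I would need $\|(v_h,\psi_h)\|_V\le C\snorm{(v_h,\psi_h)}{k,h}$ on $\CV_h^k$, which is precisely the discrete coercivity/Poincar\'e estimate from Theorem~\ref{TMexistdiscrete}. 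For the extra stabilisation piece I would use that $\phi=\Delta u$ in the appropriate weak sense and compare $\Delta_h u$ to $\phi$: since $\int_\Omega \Delta_h u\, v_h = -\int_\Omega\nabla u\cdot\nabla v_h = \int_\Omega \phi v_h$ for $v_h\in S^{k+1}_{h,0}$ (using $\phi=\Delta u$ with the Neumann condition $\partial u/\partial\bn=0$ encoded in $\M$), $\Delta_h u$ is the $L^2$-projection of $\phi$ onto $S^{k+1}_{h,0}$, so $\|\phi-\Delta_h u\|_{0,\Omega}\le Ch^{k}\|\phi\|_{k+1,\Omega}$ — hence $|\int_\Omega(\phi-\Delta_h u)(\psi_h-\Delta_h v_h)|\le Ch^k\|\phi\|_{k+1,\Omega}\snorm{(v_h,\psi_h)}{k,h}$.

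For the best-approximation term I would construct $(w_h,\xi_h)\in\CV_h^k$ as in Lemma~\ref{lemma12}, adapted to the clamped setting: take $\xi_h=\Pi_h^k\phi\in S^k_{h,0}$ the $L^2$-projection of $\phi$, and $w_h\in S^{k+1}_{h,0}$ defined by $\int_\Omega\nabla w_h\cdot\nabla q_h = -\int_\Omega\xi_h q_h$ for $q_h\in S^{k+1}_{h,0}$, i.e.\ $\Delta_h w_h=\xi_h$, so that $(w_h,\xi_h)\in\CV_h^k$. Then $\snorm{(u-w_h,\phi-\xi_h)}{k,h}^2 = \|\nabla(\phi-\xi_h)\|^2_{0,\Omega} + \|(\phi-\xi_h)-(\Delta_h u-\Delta_h w_h)\|^2_{0,\Omega} = \|\nabla(\phi-\xi_h)\|^2_{0,\Omega} + \|\phi-\Delta_h u\|^2_{0,\Omega}$ (since $\Delta_h w_h=\xi_h$). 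The first term is $\le Ch^k\|\phi\|_{k+1,\Omega}$ by the $L^2$-projection estimate, and the second is the quantity already bounded above by $Ch^k\|\phi\|_{k+1,\Omega}$. So far everything gives an $h^k$ rate; the term $h^{k-1/2-1/p}\|u\|_{k+1,p,\Omega}$ must therefore come from a more delicate estimate — I expect it arises because $\Delta_h u$ involves the Neumann data of $u$ on $\partial\Omega$ and, unlike the simply supported case, one cannot use $\lambda=0$ on the boundary; bounding $\|\phi-\Delta_h u\|_{0,\Omega}$ or a discrete-trace remainder requires a trace/inverse inequality on the boundary layer of elements, producing the fractional loss $h^{-1/2-1/p}$ against $W^{k+1,p}$-regularity. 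This boundary-layer estimate is the main obstacle; the rest is a routine repackaging of Lemmas~\ref{lemma2}--\ref{lemma12} with $a$ replaced by $a_h$ and the $V$-norm replaced by $\snorm{\cdot}{k,h}$.
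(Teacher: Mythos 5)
Your overall skeleton (second Strang lemma in the semi-norm $\snorm{\cdot}{k,h}$, the identity $a_h((v_h,\psi_h),(v_h,\psi_h))=\snorm{(v_h,\psi_h)}{k,h}^2$, and the splitting of the consistency defect into $-b((v_h,\psi_h),\lambda)$ plus the stabilisation term with $\Delta_h u=\Pi_h^{k+1}\phi$) matches the paper's Lemma \ref{lemma5} and is fine. The gap is in the best-approximation term. Your proposed pair $(w_h,\xi_h)$ with $\xi_h=\Pi_h^k\phi$ and $\Delta_h w_h=\xi_h$ does \emph{not} lie in $\CV^k_h$: for clamped boundary conditions the discrete kernel is defined by $\int_\Omega \nabla w_h\cdot\nabla\mu_h+\xi_h\,\mu_h\,dx=0$ for all $\mu_h$ in the \emph{modified} multiplier space $\Mb^k_h$, whose basis functions $\tilde\varphi_{T,i}=\varphi_{T,i}+\alpha_{T,i}\varphi_n$ do not vanish on $\partial\Omega$, so $\Mb^k_h\not\subset S^{k+1}_{h,0}$ and testing $\Delta_h w_h=\xi_h$ against $S^{k+1}_{h,0}$ enforces a different (weaker, interior-only) constraint. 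This is not a technicality: the whole difficulty of the clamped case is concentrated in the mismatch between $\Mb^k_h$ and the conforming spaces, and your construction silently erases it.

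Consequently the fractional term $h^{k-\frac12-\frac1p}\|u\|_{k+1,p,\Omega}$, which you correctly sense must come from a boundary-layer effect, is never actually produced by your argument — you only conjecture its origin. In the paper's Lemma \ref{lemma0} the roles are reversed relative to your construction: one fixes $w_h=R_h^{k+1}u$ (the Ritz projection, which makes $\Delta_h(u-w_h)=0$ exactly) and then uses the stability [P2] to \emph{solve} for $\psi_h\in S^k_{h,0}$ from the kernel constraint against $\Mb^k_h$. The remaining term $\|\psi_h-Q_h\phi\|_{1,\Omega}$ (with $Q_h$ the quasi-projection onto $S^k_{h,0}$ biorthogonal to $\Mb^k_h$) is then bounded by an inverse inequality times
$\sup_{\mu_h\in\Mb^k_h}\int_\Omega\nabla(u-R_h^{k+1}u)\cdot\nabla\mu_h\,dx/\|\mu_h\|_{0,\Omega}$,
and it is exactly here that the fractional rate appears: because $\Mb^k_h\not\subset H^1_0(\Omega)$, Galerkin orthogonality of $R_h^{k+1}$ only cancels the interior part $m_h\in S^k_{h,0}$ of $\mu_h$, leaving a sum over the boundary elements which Lemma \ref{lemma1} (a Girault--Raviart type estimate) bounds by $Ch^{k+\frac12-\frac1p}\|u\|_{k+1,p,\Omega}$; the inverse estimate then costs a factor $h^{-1}$, yielding $h^{k-\frac12-\frac1p}$. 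Without this mechanism (or an equivalent one) your proof does not establish the stated rate, so the proposal as written is incomplete at its central point.
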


\begin{remark}
Due to the uniform coercivity property of $a_h(\cdot,\cdot)$ on $\CV^k_h$ (see Theorem \ref{TMexistdiscrete}),
$\snorm{\cdot}{k,h}$ is a norm that is uniformly stronger than the $V$ norm, i.e.,
there is $C>0$ independent of $h$ such that, if $( u_h,\phi_h)\in \CV^k_h$ then 
\[
C\snorm{(u_h,\phi_h)}{k,h} \geq  \|\nabla \phi_h \|^2_{0,\Omega}  + \|\nabla u_h \|^2_{0,\Omega}.
\]
This property is all that is required to apply the second Strang lemma below. The semi-norm is not
a norm on $V$, but the following property can be established:
the kernel of $\snorm{\cdot}{k,h}$ consists of pairs $(u,0)$ such that
\[ \int_{\Omega} \nabla u \cdot \nabla u_h\,dx = 0,\quad u_h \in S_{h,0}^{k+1}.\]
Hence, even though the estimate \eqref{eq:strangn2}
might not `capture' a part of the solution $(u,\phi)$, that part actually converges to zero in $L^2$- and $H^1$-norms.
\end{remark}

We follow a strategy analogous to that used for simply supported BCs. 
Even though the second Strang lemma is often used for bilinear
forms $a_h(\cdot,\cdot)$ that are coercive on the entire continuous and discrete spaces,
the proof of \cite[Lemma 1.2, Chap. III, \S{} 1]{Bra01} and the uniform
coercivity (by construction) of $a_h(\cdot,\cdot)$ with respect to $\snorm{\cdot}{k,h}$ show that the following estimate holds:
\begin{multline} \label{eq:strang2}
\snorm{(u-u_h,\phi-\phi_h)}{k,h} \\
\leq 
C\left(\inf_{(v_h,\psi_h) \in \CV^k_h} \snorm{(u-v_h,\phi-\psi_h)}{k,h}
+\sup_{(v_h,\psi_h) \in \CV^k_h} \frac{|a_h((u,\phi), (v_h,\psi_h))-\ell(v_h)|}
{\snorm{(v_h,\psi_h)}{k,h}}\right).
\end{multline}
Theorem \ref{th1} is proved if
we bound the right-hand side of the above inequality by the
right-hand side of \eqref{eq:strangn2}.

First we prove the following lemma to estimate  the consistency error term
\[ \sup_{(v_h,\psi_h) \in \CV^k_h} \frac{|a_h((u,\phi), (v_h,\psi_h))-\ell(v_h)|}
{\snorm{(v_h,\psi_h)}{k,h}}.\]
\begin{lemma}\label{lemma5} 
Let $(u,\phi,{\lambda})$ be the solution of the saddle-point problem 
\eqref{wbiharm}.  Then, if $\lambda \in H^k(\Omega)$, $\phi \in H^k(\Omega)$  and $ u \in H^2(\Omega)$ we have 
\[ \sup_{(v_h,\psi_h) \in \CV^k_h} \frac{|a_h((u,\phi), (v_h,\psi_h))-\ell(v_h)|}
{\snorm{(v_h,\psi_h)}{k,h}}\leq C h^k \left(|\lambda|_{k,\Omega} +|\phi|_{k,\Omega} \right). \]
\end{lemma}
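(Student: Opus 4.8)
The plan is to start from the continuous equation, reduce the consistency error to a ``stabilisation'' contribution plus the term $b((v_h,\psi_h),\lambda)$, and bound each by $Ch^k(|\lambda|_{k,\Omega}+|\phi|_{k,\Omega})\,\snorm{(v_h,\psi_h)}{k,h}$. Since $V^k_h\subset V$, testing the first equation of \eqref{wbiharm} with $(v_h,\psi_h)\in\CV^k_h$ gives $a((u,\phi),(v_h,\psi_h))-\ell(v_h)=-b((v_h,\psi_h),\lambda)$, so by the definition \eqref{EQstabilised} of $a_h$,
\[
a_h((u,\phi),(v_h,\psi_h))-\ell(v_h)=-b((v_h,\psi_h),\lambda)+\int_\Omega(\phi-\Delta_h u)(\psi_h-\Delta_h v_h)\,dx .
\]
For the last (stabilisation) term, I would observe that the second equation of \eqref{wbiharm}, restricted to $\mu\in H^1(\Omega)\subset\M$, reads $\int_\Omega\nabla u\cdot\nabla\mu\,dx=-\int_\Omega\phi\,\mu\,dx$; taking $\mu=v_h\in S^{k+1}_{h,0}$ and comparing with \eqref{def.Deltah} shows that $\Delta_h u$ is exactly the $L^2(\Omega)$-orthogonal projection of $\phi$ onto $S^{k+1}_{h,0}$, whence $\|\phi-\Delta_h u\|_{0,\Omega}\le Ch^k|\phi|_{k,\Omega}$ (using $\phi\in H^k(\Omega)\cap H^1_0(\Omega)$ and a quasi-interpolant into $S^{k+1}_{h,0}$). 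Since $\|\psi_h-\Delta_h v_h\|_{0,\Omega}\le\snorm{(v_h,\psi_h)}{k,h}$ by \eqref{norm}, Cauchy--Schwarz bounds this term by $Ch^k|\phi|_{k,\Omega}\snorm{(v_h,\psi_h)}{k,h}$. (In fact $\psi_h-\Delta_h v_h\in S^{k+1}_{h,0}$, so this term even vanishes, but the cruder estimate suffices.)

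It remains to bound $b((v_h,\psi_h),\lambda)$. Using $(v_h,\psi_h)\in\CV^k_h$ I would replace $\lambda$ by $\lambda-\mu_h$, where $\mu_h\in\Mb^k_h$ is a suitable approximation of $\lambda$ (the $H^1$-orthogonal projection, or the quasi-interpolant attached to the construction of $\Mb^k_h$). Since $\lambda-\mu_h\in H^1(\Omega)$ and $v_h\in H^1_0(\Omega)$,
\[
b((v_h,\psi_h),\lambda)=b((v_h,\psi_h),\lambda-\mu_h)=\int_\Omega\psi_h(\lambda-\mu_h)\,dx+\int_\Omega\nabla v_h\cdot\nabla(\lambda-\mu_h)\,dx .
\]
The first integral is at most $\|\psi_h\|_{0,\Omega}\|\lambda-\mu_h\|_{0,\Omega}\le C\|\nabla\psi_h\|_{0,\Omega}\|\lambda-\mu_h\|_{0,\Omega}\le Ch^k|\lambda|_{k,\Omega}\snorm{(v_h,\psi_h)}{k,h}$ by Poincar\'e and \eqref{approx1}. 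For the second integral, a plain Cauchy--Schwarz estimate only gives order $h^{k-1}$ (since $|\lambda-\mu_h|_{1,\Omega}$ is of that order); instead I would use that $\|\Delta_h v_h\|_{0,\Omega}\le C\snorm{(v_h,\psi_h)}{k,h}$ (immediate from \eqref{norm} and Poincar\'e, with no inverse inequality), introduce the elliptic projection $\Pi_h\colon H^1(\Omega)\to S^{k+1}_{h,0}$ defined by $\int_\Omega\nabla(\Pi_h w)\cdot\nabla z_h\,dx=\int_\Omega\nabla w\cdot\nabla z_h\,dx$ for all $z_h\in S^{k+1}_{h,0}$, and, using $v_h\in S^{k+1}_{h,0}$ together with \eqref{def.Deltah}, rewrite
\[
\int_\Omega\nabla v_h\cdot\nabla(\lambda-\mu_h)\,dx=\int_\Omega\nabla v_h\cdot\nabla\Pi_h(\lambda-\mu_h)\,dx=-\int_\Omega\Delta_h v_h\,\Pi_h(\lambda-\mu_h)\,dx .
\]
This is bounded by $\|\Delta_h v_h\|_{0,\Omega}\|\Pi_h(\lambda-\mu_h)\|_{0,\Omega}\le C\snorm{(v_h,\psi_h)}{k,h}\|\Pi_h(\lambda-\mu_h)\|_{0,\Omega}$, and the proof is finished once $\|\Pi_h(\lambda-\mu_h)\|_{0,\Omega}\le Ch^k|\lambda|_{k,\Omega}$ is established; this I would obtain by an Aubin--Nitsche duality argument against the Dirichlet problem $-\Delta\zeta=g$ on the convex polygon $\Omega$ (so $\|\zeta\|_{2,\Omega}\le C\|g\|_{0,\Omega}$), combining the $H^1$-stability of $\Pi_h$, the interpolation estimates for $S^{k+1}_{h,0}$, and \eqref{approx1} for $\Mb^k_h$.

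The main obstacle is precisely this last $L^2$-bound on $\Pi_h(\lambda-\mu_h)$: the duality argument produces a boundary contribution $\int_{\partial\Omega}(\lambda-\mu_h)\,\partial_{\bn}\zeta\,d\sigma$ for which a naive trace estimate gives only $h^{k-1/2}$, and bringing this down to $h^k$ is exactly where the specific construction of $\Mb^k_h$ for the clamped case (redistribution of the boundary basis functions onto nearby interior ones, so that $\lambda-\mu_h$ is controlled accurately enough on $\partial\Omega$) has to be exploited. Everything else is routine, the two key structural facts being that $\Delta_h u$ is an $L^2$-projection of $\phi$ and that $\|\Delta_h v_h\|_{0,\Omega}$ is controlled by $\snorm{(v_h,\psi_h)}{k,h}$ without any inverse inequality.
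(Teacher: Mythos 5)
Your decomposition of the consistency error and your treatment of the stabilisation term coincide with the paper's: you derive $a_h((u,\phi),(v_h,\psi_h))-\ell(v_h)=-b((v_h,\psi_h),\lambda)+\int_\Omega(\phi-\Delta_h u)(\psi_h-\Delta_h v_h)\,dx$ from the first equation of \eqref{wbiharm}, and the identification $\Delta_h u=\Pi_h^{k+1}\phi$ is exactly the paper's key step for bounding $\|\phi-\Delta_h u\|_{0,\Omega}$ by $Ch^k|\phi|_{k,\Omega}$ (your parenthetical remark that this term in fact vanishes, because $\psi_h-\Delta_h v_h\in S^{k+1}_{h,0}$ while $\phi-\Pi_h^{k+1}\phi$ is $L^2$-orthogonal to $S^{k+1}_{h,0}$, is a correct refinement the paper does not make). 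The $\int_\Omega\psi_h(\lambda-\mu_h)\,dx$ part of the multiplier term is also handled as in the paper, via Poincar\'e and \eqref{approx1}.

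The genuine gap is in the gradient part $\int_\Omega\nabla v_h\cdot\nabla(\lambda-\mu_h)\,dx$. Your rewriting through the Ritz projection, $-\int_\Omega\Delta_h v_h\,\Pi_h(\lambda-\mu_h)\,dx$, is algebraically valid and the bound $\|\Delta_h v_h\|_{0,\Omega}\le C\snorm{(v_h,\psi_h)}{k,h}$ is fine, but the whole argument then hinges on $\|\Pi_h(\lambda-\mu_h)\|_{0,\Omega}\le Ch^k|\lambda|_{k,\Omega}$, which you do not prove; you yourself concede that the Aubin--Nitsche duality you would invoke produces a boundary contribution that a trace estimate only controls at order $h^{k-1/2}$. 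As it stands your estimate therefore delivers at best $h^{k-1/2}$, not the claimed $h^k$, so the lemma is not established. The paper avoids this entirely by estimating $b((v_h,\psi_h),\lambda)$ ``as in Lemma \ref{lemma2}'': one subtracts the $H^1$-orthogonal projection $\tilde\lambda_h$ of $\lambda$ onto $\Mb^k_h$ and uses the orthogonality identity \eqref{estorth}, $\int_\Omega\nabla v_h\cdot\nabla(\lambda-\tilde\lambda_h)\,dx=-\int_\Omega v_h(\lambda-\tilde\lambda_h)\,dx$, to trade the gradient integral for an $L^2$ integral, which is then $O(h^k)$ directly by property [P3] --- no duality, no inverse inequality, no half-power loss. (The paper's invocation is itself terse, since \eqref{estorth} is stated for test functions in $\Mb^k_h$ whereas here $v_h\in S^{k+1}_{h,0}$, but this orthogonality trick is the intended mechanism, and it is the idea missing from your argument.)
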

\begin{proof}
Here 
\begin{align*}
a_h((u,\phi), (v_h,\psi_h))-\ell(v_h) ={}& a((u,\phi), (v_h,\psi_h))\\
&+ \int_{\Omega} (\phi - \Delta_h u) (\psi_h -\Delta_h v_h) \,dx
-\ell(v_h).
\end{align*}
The first equation of  \eqref{wbiharm} yields
\[ a((u,\phi), (v_h,\psi_h)) + b((v_h,\psi_h),\lambda) = \ell(v_h) ,\quad (v_h,\psi_h) \in V^k_h,\]
and thus 
\[ a_h((u,\phi), (v_h,\psi_h))-\ell(v_h) =   \int_{\Omega} (\phi - \Delta_h u) (\psi_h -\Delta_h v_h) \,dx-b((v_h,\psi_h),\lambda).\]
The term $b((v_h,\psi_h),\lambda)$ can be estimated as in Lemma \ref{lemma2}. 
The stabilisation term is easily bounded using the Cauchy--Schwarz inequality 
\[ \int_{\Omega} (\phi - \Delta_h u) (\psi_h -\Delta_h v_h) \,dx \leq 
\|\phi - \Delta_h u\|_{0,\Omega} \|\psi_h - \Delta_h v_h\|_{0,\Omega}.\]
We further note that, for $u \in H^2(\Omega)$,
\[ \int_{\Omega} \Delta_h u v_h\,dx = - \int_{\Omega} \nabla u \cdot \nabla v_h \,dx = 
\int_{\Omega} \Delta u v_h\,dx,\quad v_h \in S_{h,0}^{k+1},\]
and thus 
\[ \Delta_h u = \Pi_h^{k+1} \phi,\]
where $\Pi_h^{k+1}$ is the $L^2(\Omega)$-orthogonal projector onto $S^{k+1}_{h,0}$.
The proof follows using the approximation property \eqref{approx0} on
$S^{k+1}_{h}$. 
\end{proof}

The following lemma estimates the best approximation error in the mesh-depen\-dent norm.

\begin{lemma}\label{lemma0}
Let $(u,\phi) \in \CV$ with 
$u \in W^{k+1,p}(\Omega)$ (for $p \geq {2}$) and
$\phi \in H^{k+1}(\Omega)$. Then, there exists an element $(w_h,\psi_h) \in \CV^k_h$
such that
\begin{equation}\label{eqn0}
\snorm{(u-w_h,\phi-\psi_h)}{k,h} \leq  C\left( h^{k} \|u\|_{k+1,\Omega} + h^k \|\phi\|_{k+1,\Omega} + 
h^{k-\frac{1}{2} - \frac{1}{p}} \|u\|_{k+1,p,\Omega}\right).
\end{equation}
\end{lemma}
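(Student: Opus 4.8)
The plan is to construct the approximation $(w_h,\psi_h)$ explicitly, by first choosing $\psi_h$ to be the $L^2(\Omega)$-orthogonal projection $\Pi_h^k\phi$ of $\phi$ onto $S^k_{h,0}$, and then defining $w_h\in S^{k+1}_{h,0}$ as the (unique) solution of the discrete constraint equation $\int_\Omega \nabla w_h\cdot\nabla q_h + \psi_h\,q_h\,dx=0$ for all $q_h\in S^k_{h,0}$ --- exactly as in Lemma \ref{lemma12}, but with the trial space for $w_h$ now being $S^{k+1}_{h,0}$ rather than $S^k_{h,0}$. This choice guarantees $(w_h,\psi_h)\in\CV^k_h$, so the pair is admissible for the infimum. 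It remains to bound the two pieces of $\snorm{(u-w_h,\phi-\psi_h)}{k,h}$, namely $\|\nabla(\phi-\psi_h)\|_{0,\Omega}$ and $\|(\phi-\psi_h)-\Delta_h(u-w_h)\|_{0,\Omega}$.

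The first term is the standard $L^2$-projection estimate: since $\psi_h=\Pi_h^k\phi$, we have $|\phi-\psi_h|_{1,\Omega}\le Ch^k|\phi|_{k+1,\Omega}$ by \cite{Bra01}, which is within budget. For the second term, I would use the definition \eqref{def.Deltah} of $\Delta_h$ together with the constraint equations satisfied by both $(u,\phi)$ and $(w_h,\psi_h)$. Testing the continuous constraint $\int_\Omega\nabla u\cdot\nabla q + \phi\,q\,dx=0$ against $q=v_h\in S^{k+1}_{h,0}$ shows $\Delta_h u = \Pi_h^{k+1}\phi$ (the $L^2(\Omega)$-projection of $\phi$ onto $S^{k+1}_{h,0}$), exactly as computed in the proof of Lemma \ref{lemma5}. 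On the discrete side, the definition of $\Delta_h w_h$ gives $\int_\Omega\Delta_h w_h\,v_h\,dx = -\int_\Omega\nabla w_h\cdot\nabla v_h\,dx$; but $w_h$ only satisfies the constraint against $q_h\in S^k_{h,0}$, not against all of $S^{k+1}_{h,0}$, so $\Delta_h w_h$ is \emph{not} simply $\psi_h$ restricted somewhere --- this mismatch between $S^k_{h,0}$ and $S^{k+1}_{h,0}$ is the crux of the whole estimate, and is exactly what produces the nonstandard exponent $k-\tfrac12-\tfrac1p$. I would write $(\phi-\psi_h)-\Delta_h(u-w_h) = (\phi - \Delta_h u) - (\psi_h - \Delta_h w_h)$ and estimate each parenthesis: the first is $\phi - \Pi_h^{k+1}\phi$, controlled by $Ch^{k+1}|\phi|_{k+1,\Omega}$, hence harmless; the second requires comparing $\Delta_h w_h$ with $\psi_h$.

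The heart of the argument is therefore to bound $\|\psi_h-\Delta_h w_h\|_{0,\Omega}$. For any $v_h\in S^{k+1}_{h,0}$, $\int_\Omega(\psi_h - \Delta_h w_h)v_h\,dx = \int_\Omega\psi_h v_h\,dx + \int_\Omega\nabla w_h\cdot\nabla v_h\,dx$; the constraint only kills this when $v_h\in S^k_{h,0}$, so introducing the $L^2$- (or Ritz-) projection of $v_h$ onto $S^k_{h,0}$ and splitting, the quantity reduces to a term involving $v_h - (\text{its }S^k_{h,0}\text{-projection})$ tested against $\psi_h$ and against $\nabla w_h$. I would then introduce a smooth comparison: replace $w_h$ by $u$ up to an error $\|\nabla(u-w_h)\|_{0,\Omega}$ (bounded via Lemma \ref{lemma3}-type reasoning adapted to $S^{k+1}_{h,0}$, giving $Ch^k\|u\|_{k+1,\Omega}$) and use the continuous relation $\phi=\Delta u$ together with a Green's formula on $u\in W^{k+1,p}(\Omega)$. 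The boundary and interelement terms that arise involve traces of $\nabla u$ on the skeleton of $\CT_h$; estimating these by a trace inequality on each element and summing, with $u\in W^{k+1,p}$, $p\ge 2$, is precisely what yields a factor $h^{k-\frac12-\frac1p}\|u\|_{k+1,p,\Omega}$ (the $-\tfrac12$ from the trace, the $-\tfrac1p$ from passing from an $\ell^p$ to an $\ell^2$ sum over elements via Hölder). I expect this trace/skeleton estimate, and the careful bookkeeping of which polynomial degree lives on which space, to be the main obstacle; everything else is a routine projection estimate. Combining the three contributions gives \eqref{eqn0}.
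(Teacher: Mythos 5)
Your construction is not the paper's, and it has a genuine gap at the very first step. You fix $\psi_h=\Pi_h^k\phi$ and then claim $w_h\in S^{k+1}_{h,0}$ is the ``unique'' solution of $\int_\Omega\nabla w_h\cdot\nabla q_h+\psi_h q_h\,dx=0$ for $q_h\in S^k_{h,0}$. For clamped boundary conditions the constraint defining $\CV^k_h$ is tested against $\Mb^k_h$ (not $S^k_{h,0}$), and more importantly $\dim S^{k+1}_{h,0}>\dim \Mb^k_h=\dim S^k_{h,0}$, so this system is underdetermined: $w_h$ is not unique, and nothing in your argument controls which solution you take or how well it approximates $u$. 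The paper proceeds in the opposite order: it sets $w_h=R_h^{k+1}u$ (the Ritz projection of $u$ onto $S^{k+1}_{h,0}$) \emph{first}, and then uses property [P2] to define $\psi_h\in S^k_{h,0}$ from the constraint against $\Mb^k_h$ (a square, invertible system). This ordering is not cosmetic. Because $\Delta_h$ is defined by testing against $S^{k+1}_{h,0}$ and $w_h$ is the Ritz projection onto that same space, the paper gets $\Delta_h(u-w_h)=0$ identically, so the troublesome stabilisation part of $\snorm{\cdot}{k,h}$ collapses. In your route you must instead estimate $\|\psi_h-\Delta_h w_h\|_{0,\Omega}$ for a $w_h$ you have not pinned down; since $\|\Delta_h v_h\|_{0,\Omega}$ is \emph{not} uniformly bounded by $\|\nabla v_h\|_{0,\Omega}$ (the paper emphasises exactly this when introducing the mesh-dependent norm), replacing $w_h$ by $u$ ``up to an error $\|\nabla(u-w_h)\|_{0,\Omega}$'' costs an inverse factor $h^{-1}$ and destroys the rate.

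Your heuristic for the exponent $k-\tfrac12-\tfrac1p$ (element-by-element trace inequalities over the whole skeleton plus H\"older) also does not match where that exponent actually arises. In the paper, after $\Delta_h(u-w_h)=0$ the only nontrivial term is $\|Q_h\phi-\psi_h\|_{1,\Omega}$, which is handled by an inverse estimate, [P2], and the constraint identity \eqref{leq1}, reducing everything to $\sup_{\mu_h\in\Mb^k_h}\int_\Omega\nabla(u-R_h^{k+1}u)\cdot\nabla\mu_h\,dx/\|\mu_h\|_{0,\Omega}$. That quantity is bounded in Lemma \ref{lemma1}: since $\mu_h$ differs from an element of $S^k_{h,0}\subset S^{k+1}_{h,0}$ only on the layer $\CT_h^1$ of elements touching $\partial\Omega$, the Galerkin orthogonality of the Ritz projection kills everything except a boundary strip of measure $O(h)$, and H\"older on that strip (following \cite[Lemma 3.2]{GR86}) produces $h^{k+\frac12-\frac1p}\|u\|_{k+1,p,\Omega}$; the inverse estimate then gives the final $h^{k-\frac12-\frac1p}$. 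None of this boundary-layer mechanism appears in your sketch, and the part you do describe (comparing $\Delta_h w_h$ with $\psi_h$ via skeleton traces) is left entirely unexecuted. To repair the proof, adopt the paper's order of construction and invoke Lemma \ref{lemma1}.
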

\begin{proof}
We start with the definition of the mesh-dependent norm 
\[ \snorm{(u-w_h,\phi-\psi_h)}{k,h}^2 = \|\nabla (\phi - \psi_h)\|^2_{0,\Omega} + 
\|\phi-\psi_h - \Delta_h (u-w_h)\|^2_{0,\Omega}.\]

Let $R_h^{k+1}:{H^1_0}(\Omega)\rightarrow S^{k+1}_{h,0}$ be 
the Ritz projector defined for $w \in H^1_0(\Omega)$
\[\int_{\Omega}\nabla (R_h^{k+1} w-w)\cdot\nabla v_h\,dx=0,\; v_h \in S^{k+1}_{h,0}.
\] 
With $w_h = R_h^{k+1} u$, Property [P2] enables us to define  $\psi_h \in S^k_{h,0}$ by 
\begin{equation*}
\int_{\Omega}\nabla w_h\cdot\nabla \mu_h+\psi_h\,\mu_h\,dx=0,\; 
\mu_h\in \Mb^k_h.
\end{equation*}
Hence, $(w_h,\psi_h) \in \CV^k_h$ and, since $(u,\phi) \in \CV$ and $\Mb_h^k\subset \M$,
we obtain 
\begin{equation}\label{leq1}
\int_{\Omega}\nabla (u-w_h)\cdot\nabla \mu_h+(\phi-\psi_h)\,\mu_h\,dx=0,\; 
\mu_h\in \Mb^k_h.
\end{equation}
We now use a triangle inequality  to write 
\begin{eqnarray*}
 \snorm{(u-w_h,\phi-\psi_h)}{k,h}^2 &=& 
\|\phi-\psi_h-\Delta_h(u-w_h)\|^2_{0,\Omega}+ |\phi-\psi_h|^2_{1,\Omega}\\ 
&\leq & \|\phi-\psi_h\|^2_{1,\Omega}+\|\Delta_h(u-w_h)\|^2_{0,\Omega}\\
&= & 
\|\Delta_h(u-w_h)\|^2_{0,\Omega}+ \|\phi-Q_h\phi\|^2_{1,\Omega}+\|Q_h\phi-\psi_h\|^2_{1,\Omega},
\end{eqnarray*}
where $Q_h$ is a quasi-projection operator onto $S^k_{h,0}$ defined by 
\begin{equation*}
\int_{\Omega}Q_h \phi \,\mu_h\,dx =\int_{\Omega}\phi\,\mu_h\,dx,\; 
\mu_h\in \Mb^k_h.
\end{equation*}
As above, $Q_h$ is well-defined due to Assumption [P2].
First we estimate the term $\|\Delta_h(u-w_h)\|_{0,\Omega}$.
By definition \eqref{def.Deltah} of $\Delta_h$ and by choice $w_h=R_h^{k+1} u$,
\begin{eqnarray*}
\|\Delta_h(u-w_h)\|_{0,\Omega} &=& 
\sup_{v_h \in S^{k+1}_{h,0}} \frac{\int_{\Omega}  \Delta_h(u-w_h) v_h\,dx}{\|v_h\|_{0,\Omega}}\\
&=& \sup_{v_h \in S^{k+1}_{h,0}} \frac{-\int_{\Omega}  \nabla (u-w_h) \cdot \nabla v_h\,dx}{\|v_h\|_{0,\Omega}}
=0.
\end{eqnarray*}
We know  that $Q_h \phi$ \cite{Lam06,LSW05} has the desired approximation property
\[ 
 |\phi-Q_h\phi|_{1,\Omega} \leq C h^k |\phi|_{k+1,\Omega}. 
\]

Hence, we are left with the term $\|Q_h\phi-\psi_h\|_{1,\Omega}$.
We start with an inverse estimate and use Assumption [P2] and \eqref{leq1} to get 
\begin{eqnarray*}
 \|\psi_h-Q_h\phi\|_{1,\Omega} \leq \frac{C}{h}  \|\psi_h-Q_h\phi\|_{0,\Omega} 
 &\leq& \frac{C}{h}\sup_{\mu_h \in {\Mb^k_h}}
  \frac{\int_{\Omega} (\psi_h-Q_h \phi)\,\mu_h\,dx}{\|\mu_h\|_{0,\Omega}}\\ &\leq& 
\frac{C}{h}\sup_{\mu_h \in {\Mb^k_h}}\frac{\int_{\Omega} (\psi_h- \phi)\,\mu_h\,dx}{\|\mu_h\|_{0,\Omega}}\\ &\leq& 
\frac{C}{h}\sup_{\mu_h \in {\Mb^k_h}} \frac{\int_{\Omega} \nabla (u-w_h)\cdot\nabla \mu_h\,dx}{\|\mu_h\|_{0,\Omega}}.
\end{eqnarray*}
Since $w_h$ is the Ritz projection of $u$ onto $S^{k+1}_{h,0}$, 
the final result follows using 
Lemma \ref{lemma1}. \end{proof}

The following lemma is proved using the ideas in \cite[Lemma 3.2]{GR86}.
See also  \cite{Sch78}. 
\begin{lemma}\label{lemma1}
Let $k \in \BBN$ and $p \in \BBR$ such that $k\geq 1$ 
and $2\leq p \leq \infty$. Let $R_h^{k+1}:H^1_0(\Omega)\to S_{h,0}^{k+1}$ be the Ritz projection as defined in 
Lemma \ref{lemma0}. {Then, there exists a constant $C>0$
such that, for any $ w\in W^{k+1,p}(\Omega)\cap H_0^1(\Omega)$,}
\begin{equation}\label{lest}
\sup_{\mu_h \in \Mb^k_h}\frac{\int_{\Omega}\nabla (w-R_h^{k+1}w)\cdot\nabla \mu_h\,dx}
{\|\mu_h\|_{0,\Omega}} \leq C h^{k+\frac{1}{2}-\frac{1}{p}}\|w\|_{k+1,p,\Omega}. 
 \end{equation}
\end{lemma}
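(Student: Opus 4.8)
The plan is to exploit the Galerkin orthogonality of the Ritz projector $R_h^{k+1}$ to reduce the left-hand side of \eqref{lest} to an integral concentrated in a thin boundary strip, and then to estimate that integral with elementary finite element tools. Set $e:=w-R_h^{k+1}w$; since $w\in H^1_0(\Omega)$ and $R_h^{k+1}w\in S^{k+1}_{h,0}$, we have $e\in H^1_0(\Omega)$, so in particular $e=0$ on $\partial\Omega$. Recall from the construction of $\Mb^k_h$ that every $\mu_h\in\Mb^k_h$ splits as $\mu_h=\mu_h^0+\mu_h^b$, where $\mu_h^0\in S^k_{h,0}$ collects the unmodified interior nodal contributions and $\mu_h^b$ is a linear combination of the boundary nodal basis functions of $S^k_h$; the latter is supported in a strip $\Omega_h=\{x\in\Omega:\ \mathrm{dist}(x,\partial\Omega)\le Ch\}$ of width $O(h)$, because on a shape-regular mesh the closest interior element to a boundary node lies at distance $O(h)$. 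Since $S^k_{h,0}\subset S^{k+1}_{h,0}$ (a degree-$k$ Lagrange function is also a degree-$(k+1)$ Lagrange function), the defining relation of the Ritz projector gives $\int_\Omega\nabla e\cdot\nabla\mu_h^0\,dx=0$, whence
\[
\int_\Omega\nabla e\cdot\nabla\mu_h\,dx=\int_{\Omega_h}\nabla e\cdot\nabla\mu_h^b\,dx.
\]
A scaling argument---using, on a quasi-uniform mesh, the equivalence of the $L^2$-norm of a finite element function with the weighted $\ell^2$-norm of its nodal values, together with the explicit form of the modified basis functions---gives $\|\mu_h^0\|_{0,\Omega}+\|\mu_h^b\|_{0,\Omega}\le C\|\mu_h\|_{0,\Omega}$, so it suffices to bound the right-hand side above by $Ch^{k+\frac12-\frac1p}\|w\|_{k+1,p,\Omega}\,\|\mu_h^b\|_{0,\Omega}$.

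Next I would integrate by parts element by element, moving one derivative onto $\mu_h^b$. Because $e$ is continuous and vanishes on $\partial\Omega$, the boundary faces contribute nothing, and one is left with
\[
\int_{\Omega_h}\nabla e\cdot\nabla\mu_h^b\,dx=-\int_{\Omega_h}e\,\Delta_h\mu_h^b\,dx+\sum_F\int_F e\,[\![\partial_n\mu_h^b]\!]\,ds,
\]
where the sums run over the interior faces meeting $\Omega_h$. To estimate these two terms I would combine: (i) the classical interior $L^p$ estimates for the Ritz projector on quasi-uniform meshes, $\|e\|_{0,p,\Omega}\le Ch^{k+1}\|w\|_{k+1,p,\Omega}$ and $\|\nabla e\|_{0,p,\Omega}\le Ch^{k}\|w\|_{k+1,p,\Omega}$ (and, for the first term when $k\ge2$, the analogous broken $W^{2,p}$ bound), valid for $2\le p\le\infty$ thanks to $k+1\ge2$; (ii) Hölder's inequality on the thin strip, which turns an $L^p(\Omega_h)$ bound into an $L^2(\Omega_h)$ bound at the price of the factor $|\Omega_h|^{\frac12-\frac1p}\le Ch^{\frac12-\frac1p}$---this is where the fractional power of $h$ enters; (iii) a Poincaré/Hardy inequality on $\Omega_h$, $\|e\|_{0,\Omega_h}\le Ch\,\|\nabla e\|_{0,\Omega_h}$, available precisely because $e$ vanishes on $\partial\Omega$; and (iv) trace and inverse inequalities, to control $\Delta_h\mu_h^b$, the face jumps $[\![\partial_n\mu_h^b]\!]$ and the face values $\|e\|_{0,F}$ by volume norms over $\Omega_h$. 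Carrying this bookkeeping out along the lines of \cite[Lemma~3.2]{GR86} (see also \cite{Sch78}) yields the asserted estimate.

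The routine ingredients are the decomposition of $\mu_h$, its $L^2$-stability, the elementwise integration by parts, and the standard interior Ritz estimates. The delicate point---and the real content of the lemma---is the sharp balancing of the powers of $h$ in the strip: each inverse inequality used to pass from $\nabla\mu_h^b$, $\Delta_h\mu_h^b$ or $[\![\partial_n\mu_h^b]\!]$ back to $\|\mu_h^b\|_{0,\Omega}$ costs a power of $h$, and these losses must be absorbed \emph{exactly} by the extra smallness of $e$ and its derivatives near $\partial\Omega$ furnished by (i)--(iii), so as to end up with the exponent $k+\frac12-\frac1p$ and not a weaker one. This is the step where the technique of \cite{GR86,Sch78} is really used, and I expect it to be the main obstacle.
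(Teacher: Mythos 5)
Your opening reduction is exactly the paper's proof: the paper likewise introduces $m_h\in S^k_{h,0}$ agreeing with $\mu_h$ at the interior nodes, uses $S^k_{h,0}\subset S^{k+1}_{h,0}$ and the Galerkin orthogonality of $R_h^{k+1}$ to kill the interior part, and localises the integral to the elements of $\CT_h$ touching $\partial\Omega$. At that point the paper stops and writes that ``the rest of the proof is exactly as in [GR86, Lemma 3.2]'', so in terms of what is actually written down you have reproduced the paper's argument in full.

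The gap is in your sketch of the deferred step: the ingredients (i)--(iv) you list cannot be ``balanced'' to the exponent $k+\tfrac12-\tfrac1p$ --- they fall exactly one power of $h$ short, however you arrange them. Write $e=w-R_h^{k+1}w$. The direct route gives
\[
\Bigl|\int_{\Omega_h}\nabla e\cdot\nabla\mu_h^b\,dx\Bigr|
\le \|\nabla e\|_{0,\Omega_h}\,\|\nabla\mu_h^b\|_{0,\Omega_h}
\le C h^{\frac12-\frac1p}\,h^{k}\,\|w\|_{k+1,p,\Omega}\cdot Ch^{-1}\|\mu_h\|_{0,\Omega},
\]
i.e.\ $h^{k-\frac12-\frac1p}$, and the inverse estimate $\|\nabla\mu_h^b\|_{0,\Omega_h}\le Ch^{-1}\|\mu_h^b\|_{0,\Omega}$ is sharp because $\mu_h^b$ genuinely oscillates on the scale $h$. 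Your elementwise integration by parts does not improve this: the jump of $\partial_n\mu_h^b$ across a face $F$ costs $Ch^{-3/2}\|\mu_h^b\|_{0,\omega_F}$ (trace plus inverse), while the face values of $e$ gain only $\|e\|_{0,F}\le C\bigl(h^{-1/2}\|e\|_{0,T}+h^{1/2}\|\nabla e\|_{0,T}\bigr)\le Ch^{1/2}\|\nabla e\|_{0,\Omega_h}$ even after the strip Poincar\'e inequality $\|e\|_{0,\Omega_h}\le Ch\|\nabla e\|_{0,\Omega_h}$; the net factor is again $h^{-1}\|\nabla e\|_{0,\Omega_h}\|\mu_h\|_{0,\Omega}$. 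In other words, the power of $h$ you gain from the vanishing of $e$ on $\partial\Omega$ is exactly repaid by the extra derivative you place on $\mu_h^b$, and the same shortfall appears if one instead works nodewise with the $L^\infty$-superconvergence $\|e\|_{0,\infty,\Omega}\le Ch^{k+1}\|w\|_{k+1,\infty,\Omega}$. The decisive ingredient of \cite[Lemma 3.2]{GR86} and \cite{Sch78} is therefore a genuinely sharper boundary estimate (exploiting cancellation that Cauchy--Schwarz and inverse inequalities discard), not a rebalancing of (i)--(iv). Since the paper does not reproduce that ingredient either, your proposal is no less complete than the paper's written proof; but the continuation you outline would not close to the stated rate, so the citation is doing essential work that your sketch does not capture.
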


\begin{proof}
Let $\CT_h^1$ be the set of elements in $\CT_h$ touching the boundary of $\Omega$. 
Let $\mu_h \in \Mb_h^k$ be arbitrary and $m_h\in S_{h,0}^k$ which coincides with $\mu_h$ at all interior finite element nodes. 
Since $ S_{h,0}^k \subset S_{h,0}^{k+1}$, we have 
\[ \int_{\Omega} \nabla (w-R_h^{k+1} w)\cdot\nabla \psi_h\,dx =0,\quad \psi_h \in S_{h,0}^k.\]
Thus we have 
\[ \int_{\Omega} \nabla (w-R_h^{k+1} w)\cdot\nabla \mu_h\,dx = 
\sum_{T \in \CT_h^1} \int_{T} \nabla (w-R_h^{k+1} w)\cdot\nabla (\mu_h-m_h) \,dx.\]
The rest of the proof is exactly as in \cite[Lemma 3.2]{GR86}.
\end{proof}

\section{Numerical Results}\label{sec:numer}
In this section, we show some numerical experiments for the sixth-order elliptic equation using both types of boundary conditions. We compute the convergence rates in $L^2$-norm and $H^1$-seminorm for $u$ and $\phi$, and the convergence rates in $L^2$-norm for our Lagrange multiplier. This computation will be done using linear and quadratic finite element spaces.

\begin{figure}[htb]
	\centering
	\begin{subfigure}{.3\textwidth}
		\centering
		\includegraphics[width=\linewidth]{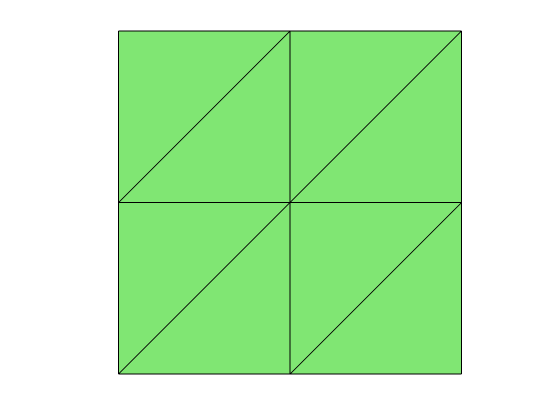}
		\caption{Initial mesh for simply supported boundary conditions}
		\label{fig:init1}
	\end{subfigure} 
	\hspace{2cm}
	\begin{subfigure}{.4\textwidth}
		\centering
		\includegraphics[width=\linewidth]{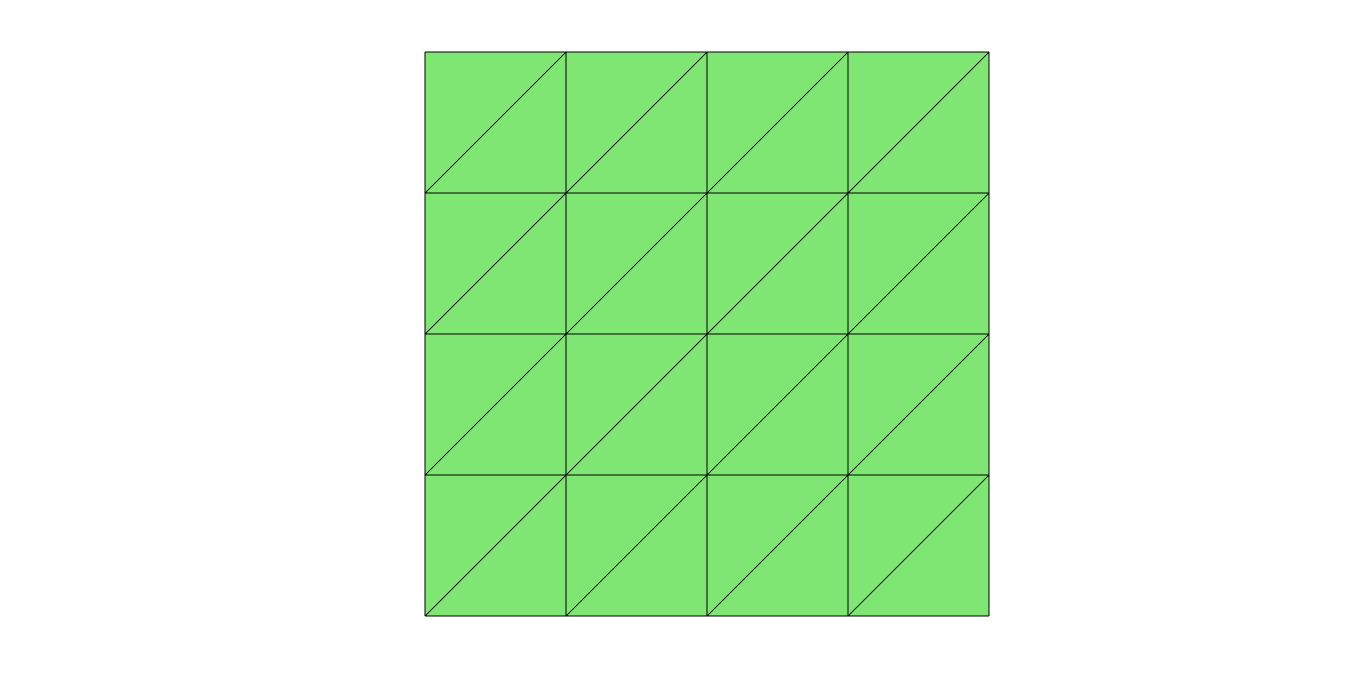}
		\caption{Initial mesh for clamped boundary conditions}
		\label{fig:init2}
	\end{subfigure}
	\caption{Initial meshes}
	\label{imesh}
\end{figure}

\setlength\tabcolsep{4 pt}
\subsection{Simply supported boundary conditions}
\paragraph{Examples 1 and 2} 
We consider the exact solution
\begin{equation}\label{ex.sol.1}
u = x^5(1-x)^5y^5(1-y)^5 \text{ in $\Omega = \left(0,1\right)^2$},
\end{equation}
for the first example and  the exact solution 
\begin{equation}\label{ex.sol.2}
u = \left(e^y + e^x\right)x^5(1-x)^5y^5(1-y)^5 \text{ in $\Omega = \left(0,1\right)^2$},
\end{equation}
for the second example, where both functions satisfy simply supported boundary conditions $u = \Delta u = \Delta^2 u = 0$ on $\partial\Omega$. 
We start with the initial mesh as given in the left picture of Figure \ref{imesh} and 
compute the relative errors in various norms associated with our variables at each step of refinement.

\begin{table}[htb!]
	\centering
	\caption{Discretisation errors  for the simply supported boundary condition: linear case and exact solution \eqref{ex.sol.1}}
	\begin{tabular}{|c|cc|cc|cc|cc|cc|}
		\hline
		\multirow{2}{*}{elem} & \multicolumn{2}{c|}{$\frac{\left\Vert u - u_h\right\Vert_{0,\Omega}}{\left\Vert u\right\Vert_{0,\Omega}}$} & \multicolumn{2}{c|}{$\frac{| u - u_h|_{1,\Omega}}{|u|_{1,\Omega}}$} & \multicolumn{2}{c|}{$\frac{\left\Vert \phi - \phi_h\right\Vert_{0,\Omega}}{\left\Vert \phi\right\Vert_{0,\Omega}}$} & \multicolumn{2}{c|}{$\frac{|\phi - \phi_h|_{1,\Omega}}{| \phi|_{1,\Omega}}$} & \multicolumn{2}{c|}{$\frac{\left\Vert \lambda - \lambda_h\right\Vert_{0,\Omega}}{\left\Vert \lambda\right\Vert_{0,\Omega}}$}\\ \hhline{~----------}
		& error & rate & error & rate & error & rate & error & rate & error & rate \\\hline
   8 & 1.71e+02 &          & 1.44e+02 &          & 8.51e+01 &          & 4.33e+01 &          & 1.88e+01 &          \\\hline
   32 & 5.17e+01 & 1.72 & 3.71e+01 & 1.96 & 1.85e+01 & 2.20 & 8.14 & 2.41 & 3.15 & 2.58 \\\hline
  128 & 1.74e+01 & 1.57 & 1.18e+01 & 1.65 & 5.59 & 1.72 & 2.40 & 1.76 & 1.00 & 1.65 \\\hline
  512 & 4.71 & 1.88 & 3.17 & 1.90 & 1.48 & 1.92 & 6.74e$-$01 & 1.83 & 2.86e$-$01 & 1.81 \\\hline
 2048 & 1.20 & 1.97 & 8.11e$-$01 & 1.97 & 3.76e$-$01 & 1.98 & 2.05e$-$01 & 1.72 & 7.48e$-$02 & 1.94 \\\hline
 8192 & 3.02e$-$01 & 1.99 & 2.08e$-$01 & 1.96 & 9.44e$-$02 & 1.99 & 7.65e$-$02 & 1.42 & 1.89e$-$02 & 1.98 \\\hline
32768 & 7.57e$-$02 & 2.00 & 5.59e$-$02 & 1.89 & 2.36e$-$02 & 2.00 & 3.42e$-$02 & 1.16 & 4.75e$-$03 & 2.00 \\\hline
131072 & 1.89e$-$02 & 2.00 & 1.74e$-$02 & 1.69 & 5.91e$-$03 & 2.00 & 1.65e$-$02 & 1.05 & 1.19e$-$03 & 2.00 \\\hline
524288 & 4.73e$-$03 & 2.00 & 6.75e$-$03 & 1.37 & 1.48e$-$03 & 2.00 & 8.20e$-$03 & 1.01 & 2.94e$-$04 & 2.00 \\\hline
		
	\end{tabular}
	\label{ss1:linear}
\end{table}

\begin{table}[htb!]
	\centering
	\caption{Discretisation errors  for the simply supported boundary condition: quadratic case and exact solution \eqref{ex.sol.1}}
	\begin{tabular}{|c|cc|cc|cc|cc|cc|}
		\hline
		\multirow{2}{*}{elem} & \multicolumn{2}{c|}{$\frac{\left\Vert u - u_h\right\Vert_{0,\Omega}}{\left\Vert u\right\Vert_{0,\Omega}}$} & \multicolumn{2}{c|}{$\frac{| u - u_h|_{1,\Omega}}{|u|_{1,\Omega}}$} & \multicolumn{2}{c|}{$\frac{\left\Vert \phi - \phi_h\right\Vert_{0,\Omega}}{\left\Vert \phi\right\Vert_{0,\Omega}}$} & \multicolumn{2}{c|}{$\frac{|\phi - \phi_h|_{1,\Omega}}{| \phi|_{1,\Omega}}$} & \multicolumn{2}{c|}{$\frac{\left\Vert \lambda - \lambda_h\right\Vert_{0,\Omega}}{\left\Vert \lambda\right\Vert_{0,\Omega}}$}\\ \hhline{~----------}
		& error & rate & error & rate & error & rate & error & rate & error & rate \\\hline
  8 & 1.01e$-$01 &   & 4.70e$-$01 &   & 2.13 &          & 1.00e+01 &          & 4.85e+01 &          \\\hline
   32 & 3.56e$-$04 & 8.15 & 3.32e$-$03 & 7.15 & 3.10e$-$02 & 6.10 & 4.27e$-$01 & 4.55 & 4.72 & 3.36 \\\hline
  128 & 5.55e$-$05 & 2.68 & 6.31e$-$04 & 2.40 & 4.24e$-$03 & 2.87 & 9.65e$-$02 & 2.15 & 7.38e$-$01 & 2.68 \\\hline
  512 & 4.15e$-$06 & 3.74 & 1.33e$-$04 & 2.24 & 3.63e$-$04 & 3.55 & 2.43e$-$02 & 1.99 & 8.15e$-$02 & 3.18 \\\hline
 2048 & 2.89e$-$07 & 3.84 & 3.29e$-$05 & 2.02 & 3.13e$-$05 & 3.54 & 6.26e$-$03 & 1.96 & 9.36e$-$03 & 3.12 \\\hline
 8192 & 2.24e$-$08 & 3.69 & 8.23e$-$06 & 2.00 & 3.22e$-$06 & 3.28 & 1.58e$-$03 & 1.99 & 1.14e$-$03 & 3.04 \\\hline
32768 & 2.15e$-$09 & 3.38 & 2.06e$-$06 & 2.00 & 3.76e$-$07 & 3.10 & 3.96e$-$04 & 2.00 & 1.41e$-$04 & 3.01 \\\hline
		
	\end{tabular}
	\label{ss1:quadratic}
\end{table}

\begin{table}[htb!]
	\centering
	\caption{Discretisation errors  for the simply supported boundary condition: linear case and exact solution \eqref{ex.sol.2}}
	\begin{tabular}{|c|cc|cc|cc|cc|cc|}
		\hline
			\multirow{2}{*}{elem} & \multicolumn{2}{c|}{$\frac{\left\Vert u - u_h\right\Vert_{0,\Omega}}{\left\Vert u\right\Vert_{0,\Omega}}$} & \multicolumn{2}{c|}{$\frac{| u - u_h|_{1,\Omega}}{|u|_{1,\Omega}}$} & \multicolumn{2}{c|}{$\frac{\left\Vert \phi - \phi_h\right\Vert_{0,\Omega}}{\left\Vert \phi\right\Vert_{0,\Omega}}$} & \multicolumn{2}{c|}{$\frac{|\phi - \phi_h|_{1,\Omega}}{| \phi|_{1,\Omega}}$} & \multicolumn{2}{c|}{$\frac{\left\Vert \lambda - \lambda_h\right\Vert_{0,\Omega}}{\left\Vert \lambda\right\Vert_{0,\Omega}}$}\\ \hhline{~----------}
		& error & rate & error & rate & error & rate & error & rate & error & rate \\\hline
    8 & 1.77e+02 &   & 1.49e+02 &   & 8.84e+01 &          & 4.50e+01 &          & 1.95e+01 &          \\\hline
   32 & 6.18e+01 & 1.52 & 4.44e+01 & 1.75 & 2.21e+01 & 2.00 & 9.75 & 2.21 & 3.76 & 2.38 \\\hline
  128 & 1.98e+01 & 1.64 & 1.35e+01 & 1.72 & 6.40 & 1.79 & 2.74 & 1.83 & 1.13 & 1.74 \\\hline
  512 & 5.28 & 1.91 & 3.55 & 1.93 & 1.66 & 1.94 & 7.46e$-$01 & 1.88 & 3.16e$-$01 & 1.83 \\\hline
 2048 & 1.34 & 1.98 & 9.04e$-$01 & 1.98 & 4.20e$-$01 & 1.98 & 2.21e$-$01 & 1.76 & 8.21e$-$02 & 1.94 \\\hline
 8192 & 3.36e$-$01 & 1.99 & 2.30e$-$01 & 1.97 & 1.05e$-$01 & 2.00 & 7.94e$-$02 & 1.47 & 2.07e$-$02 & 1.98 \\\hline
32768 & 8.42e$-$02 & 2.00 & 6.12e$-$02 & 1.91 & 2.64e$-$02 & 2.00 & 3.48e$-$02 & 1.19 & 5.20e$-$03 & 2.00 \\\hline
131072 & 2.10e$-$02 & 2.00 & 1.85e$-$02 & 1.73 & 6.59e$-$03 & 2.00 & 1.67e$-$02 & 1.06 & 1.30e$-$03 & 2.00 \\\hline
524288 & 5.26e$-$03 & 2.00 & 6.94e$-$03 & 1.41 & 1.65e$-$03 & 2.00 & 8.27e$-$03 & 1.02 & 3.25e$-$04 & 2.00 \\\hline
	\end{tabular}
	\label{ss2:linear}
\end{table}

\begin{table}[htb!]
	\centering
	\caption{Discretisation errors  for the simply supported boundary condition: quadratic case and exact solution \eqref{ex.sol.2}}
	\begin{tabular}{|c|cc|cc|cc|cc|cc|}
		\hline
			\multirow{2}{*}{elem} & \multicolumn{2}{c|}{$\frac{\left\Vert u - u_h\right\Vert_{0,\Omega}}{\left\Vert u\right\Vert_{0,\Omega}}$} & \multicolumn{2}{c|}{$\frac{| u - u_h|_{1,\Omega}}{|u|_{1,\Omega}}$} & \multicolumn{2}{c|}{$\frac{\left\Vert \phi - \phi_h\right\Vert_{0,\Omega}}{\left\Vert \phi\right\Vert_{0,\Omega}}$} & \multicolumn{2}{c|}{$\frac{|\phi - \phi_h|_{1,\Omega}}{| \phi|_{1,\Omega}}$} & \multicolumn{2}{c|}{$\frac{\left\Vert \lambda - \lambda_h\right\Vert_{0,\Omega}}{\left\Vert \lambda\right\Vert_{0,\Omega}}$}\\ \hhline{~----------}
		& error & rate & error & rate & error & rate & error & rate & error & rate \\\hline
 8 & 3.07e$-$01 &   & 1.43 &   & 6.47 &          & 3.06e+01 &          & 1.53e+02 &          \\\hline
   32 & 4.05e$-$03 & 6.24 & 2.10e$-$02 & 6.09 & 1.33e$-$01 & 5.60 & 1.51 & 4.34 & 1.71e+01 & 3.16 \\\hline
  128 & 3.25e$-$04 & 3.64 & 2.44e$-$03 & 3.11 & 1.55e$-$02 & 3.11 & 3.27e$-$01 & 2.21 & 2.69 & 2.67 \\\hline
  512 & 2.20e$-$05 & 3.89 & 4.51e$-$04 & 2.44 & 1.29e$-$03 & 3.59 & 8.19e$-$02 & 2.00 & 3.06e$-$01 & 3.13 \\\hline
 2048 & 1.44e$-$06 & 3.93 & 1.10e$-$04 & 2.04 & 1.09e$-$04 & 3.56 & 2.11e$-$02 & 1.95 & 3.59e$-$02 & 3.09 \\\hline
 8192 & 1.01e$-$07 & 3.84 & 2.75e$-$05 & 2.00 & 1.10e$-$05 & 3.30 & 5.34e$-$03 & 1.98 & 4.39e$-$03 & 3.03 \\\hline
32768 & 8.32e$-$09 & 3.59 & 6.87e$-$06 & 2.00 & 1.28e$-$06 & 3.10 & 1.34e$-$03 & 2.00 & 5.46e$-$04 & 3.01 \\\hline
 
	\end{tabular}
	\label{ss2:quadratic}
\end{table}

From Tables \ref{ss1:linear} and \ref{ss2:linear}, we can see 
the quadratic convergence of errors in $L^2$-norm of the linear finite element method for 
$u$, $\phi$ and $\lambda$, whereas 
the convergence of errors in the $H^1$-seminorm for $u$ is slightly better than linear but for $\phi$ it is linear. 
We note that convergence in the $H^1$-seminorm for  $u$ is better in the earlier steps of refinement and as the 
refinement becomes finer and finer, the convergence rate becomes almost linear.

We have tabulated numerical results with the quadratic finite element method in 
Tables \ref{ss1:quadratic} and \ref{ss2:quadratic}. 
Working with the quadratic finite element we see slightly better than $O(h^{3})$ rate of 
convergence for the convergence of the errors in $L^2$-norm for $u$, whereas 
the convergence is of $O(h^{2})$ for the errors in the semi $H^1$-norm. Similarly, 
the errors in the $L^2$-norm for $\phi$ and $\lambda$ converge with order 
$O(h^3)$, respectively, whereas the errors in the semi $H^1$-norm of $\phi$ converge with $O(h^2)$. 
The numerical results follow the predicted theoretical rates also for both examples.

\paragraph{Example 3} 
In the third example we consider the exact solution satisfying 
$u=0,\Delta u=0, \Delta^2 u = 0$ but $\nabla u\cdot \mathbf{n}\neq 0$ on the boundary:
\begin{equation} \label{exa3}
u = \sin\left(\pi x\right) \sin\left(\pi y\right) \text{ in } \Omega =\left( 0,1\right)^{2}.
\end{equation}
We note that the exact solutions chosen for Examples 1 and 2 satisfy 
 $\nabla u\cdot \mathbf{n} = 0$ on the boundary of the domain $\Omega$.

We start with the initial mesh as given in the left picture of Figure \ref{imesh}
and compute the relative errors in various norms for all  three variables at each step of refinement.
The computed errors in different norms are tabulated in Tables \ref{ss3:linear} and 
\ref{ss3:quadratic}. 
Interestingly, we still get the same rate of convergence for most of the norms with two exceptions: (i) in case of the linear finite element method, we do not observe a super-convergence rate in $H^1$-norm of $u$, and (ii) in the quadratic finite element method, the rate of convergence in $L^2$-norm of $u$ is only $O(h^3)$.

\begin{table}[htb!]
	\centering
	\caption{Discretisation errors  for the simply supported boundary condition: linear case and exact solution \eqref{exa3}}
	\begin{tabular}{|c|cc|cc|cc|cc|cc|}
		\hline
				\multirow{2}{*}{elem} & \multicolumn{2}{c|}{$\frac{\left\Vert u - u_h\right\Vert_{0,\Omega}}{\left\Vert u\right\Vert_{0,\Omega}}$} & \multicolumn{2}{c|}{$\frac{| u - u_h|_{1,\Omega}}{|u|_{1,\Omega}}$} & \multicolumn{2}{c|}{$\frac{\left\Vert \phi - \phi_h\right\Vert_{0,\Omega}}{\left\Vert \phi\right\Vert_{0,\Omega}}$} & \multicolumn{2}{c|}{$\frac{|\phi - \phi_h|_{1,\Omega}}{| \phi|_{1,\Omega}}$} & \multicolumn{2}{c|}{$\frac{\left\Vert \lambda - \lambda_h\right\Vert_{0,\Omega}}{\left\Vert \lambda\right\Vert_{0,\Omega}}$}\\ \hhline{~----------}
		& error & rate & error & rate & error & rate & error & rate & error & rate \\\hline
    8 & 8.42e$-$01 &  & 8.78e$-$01 &  & 7.47e$-$01 &  & 1.23e+03 &  & 5.95e$-$01 &  \\\hline
    32 & 4.22e$-$01 & 1.00 & 4.91e$-$01 & 0.84 & 3.30e$-$01 & 1.18 & 6.65e+02 & 0.89 & 2.27e$-$01 & 1.39 \\\hline
    128 & 1.32e$-$01 & 1.68 & 2.18e$-$01 & 1.17 & 9.83e$-$02 & 1.75 & 3.13e+02 & 1.08 & 6.42e$-$02 & 1.83 \\\hline
    512 & 3.50e$-$02 & 1.91 & 1.01e$-$01 & 1.11 & 2.57e$-$02 & 1.93 & 1.52e+02 & 1.04 & 1.65e$-$02 & 1.95 \\\hline
    2048 & 8.88e$-$03 & 1.98 & 4.95e$-$02 & 1.03 & 6.50e$-$03 & 1.98 & 7.53e+01 & 1.01 & 4.16e$-$03 & 1.99 \\\hline
    8192 & 2.22e$-$03 & 1.99 & 2.46e$-$02 & 1.01 & 1.63e$-$03 & 2.00 & 3.76e+01 & 1.00 & 1.04e$-$03 & 2.00 \\\hline
    32768 & 5.58e$-$04 & 2.00 & 1.23e$-$02 & 1.00 & 4.08e$-$04 & 2.00 & 1.87e+01 & 1.00 & 2.61e$-$04 & 2.00 \\\hline
	\end{tabular}
	\label{ss3:linear}
\end{table}

\begin{table}[htb!]
	\centering
	\caption{Discretisation errors  for the simply supported boundary condition: quadratic case and exact solution \eqref{exa3}}
	\begin{tabular}{|c|cc|cc|cc|cc|cc|}
		\hline
		\multirow{2}{*}{elem} & \multicolumn{2}{c|}{$\frac{\left\Vert u - u_h\right\Vert_{0,\Omega}}{\left\Vert u\right\Vert_{0,\Omega}}$} & \multicolumn{2}{c|}{$\frac{| u - u_h|_{1,\Omega}}{|u|_{1,\Omega}}$} & \multicolumn{2}{c|}{$\frac{\left\Vert \phi - \phi_h\right\Vert_{0,\Omega}}{\left\Vert \phi\right\Vert_{0,\Omega}}$} & \multicolumn{2}{c|}{$\frac{|\phi - \phi_h|_{1,\Omega}}{| \phi|_{1,\Omega}}$} & \multicolumn{2}{c|}{$\frac{\left\Vert \lambda - \lambda_h\right\Vert_{0,\Omega}}{\left\Vert \lambda\right\Vert_{0,\Omega}}$}\\ \hhline{~----------}
		& error & rate & error & rate & error & rate & error & rate & error & rate \\\hline
    8 & 2.12e$-$01 &  & 2.29e$-$01 &  & 1.63e$-$01 &  & 2.90e+02 &  & 1.14e$-$01 &  \\\hline
    32 & 2.12e$-$02 & 3.33 & 4.22e$-$02 & 2.44 & 1.70e$-$02 & 3.26 & 6.20e+01 & 2.22 & 1.36e$-$02 & 3.08 \\\hline
    128 & 1.98e$-$03 & 3.42 & 9.99e$-$03 & 2.08 & 1.78e$-$03 & 3.26 & 1.52e+01 & 2.03 & 1.63e$-$03 & 3.05 \\\hline
    512 & 2.14e$-$04 & 3.21 & 2.49e$-$03 & 2.01 & 2.07e$-$04 & 3.11 & 3.80 & 2.00 & 2.01e$-$04 & 3.02 \\\hline
    2048 & 2.54e$-$05 & 3.07 & 6.21e$-$04 & 2.00 & 2.52e$-$05 & 3.03 & 9.51e$-$01 & 2.00 & 2.51e$-$05 & 3.00 \\\hline
    8192 & 3.14e$-$06 & 3.02 & 1.56e$-$04 & 2.00 & 3.14e$-$06 & 3.01 & 2.38e$-$01 & 2.00 & 3.14e$-$06 & 3.00 \\\hline	
	\end{tabular}
	\label{ss3:quadratic}
\end{table}

\subsection{Clamped boundary conditions}

\paragraph{Example 1}
We choose the exact solution 
\begin{equation}\label{ex.sol.3}
u = 4096x^3(1-x)^3y^3(1-y)^3 \text{ in $\Omega = \left(0,1\right)^2$}, 
\end{equation}
so that the exact solution satisfies the clamped boundary condition 
\[ u = \Delta u = \frac{\partial u}{\partial \textbf{n}} =0 \quad\text{on}\quad 
\partial \Omega.\]
For our clamped boundary condition 
we start with the initial mesh as given in the right picture of Figure \ref{imesh}. 
In the following $\phi$ and $\lambda$ are discretised using the linear finite element space, whereas 
$u$ is discretised using the quadratic finite element space. That means we use the finite element spaces 
with $k=1$. 
The numerical results are tabulated in  Table \ref{clbc1}. In this example, we get 
higher convergence rates than predicted by the theory for all errors. 
These results seem to indicate a higher order of convergence for $u$ in the semi
 $H^1$-norm than in 
the $L^2$-norm. This can
either be due to the asymptotic rates not being achieved at the grid levels considered, or to some genuine
super-convergence result. Understanding this phenomenon in more depth is the purpose of future work.
As in the case of Examples 1 and 2 of the simply supported boundary condition we see the 
better convergence rates for the  semi $H^1$-norm in earlier steps of refinement. 
However, when we refine further the convergence rates decrease close to 2. Thus the better convergence rates 
are due to the asymptotic rates not being achieved at the earlier steps of refinement.

\begin{table}[htb!]
	\centering
	\caption{Discretisation errors  for the clamped boundary condition: exact solution \eqref{ex.sol.3}}
	\begin{tabular}{|c|cc|cc|cc|cc|cc|}
		\hline
			\multirow{2}{*}{elem} & \multicolumn{2}{c|}{$\frac{\left\Vert u - u_h\right\Vert_{0,\Omega}}{\left\Vert u\right\Vert_{0,\Omega}}$} & \multicolumn{2}{c|}{$\frac{| u - u_h|_{1,\Omega}}{|u|_{1,\Omega}}$} & \multicolumn{2}{c|}{$\frac{\left\Vert \phi - \phi_h\right\Vert_{0,\Omega}}{\left\Vert \phi\right\Vert_{0,\Omega}}$} & \multicolumn{2}{c|}{$\frac{|\phi - \phi_h|_{1,\Omega}}{| \phi|_{1,\Omega}}$} & \multicolumn{2}{c|}{$\frac{\left\Vert \lambda - \lambda_h\right\Vert_{0,\Omega}}{\left\Vert \lambda\right\Vert_{0,\Omega}}$}\\ \hhline{~----------}
		& error & rate & error & rate & error & rate & error & rate & error & rate \\\hline
 32 & 4.34 &   & 8.51 &   & 7.47e$-$01 &          & 9.69e$-$01 &          & 9.20e$-$01 &          \\\hline
  128 & 1.09 & 2.00 & 3.46 & 1.30 & 3.06e$-$01 & 1.29 & 5.38e$-$01 & 0.85 & 3.88e$-$01 & 1.25 \\\hline
  512 & 1.85e$-$01 & 2.55 & 6.43e$-$01 & 2.43 & 1.26e$-$01 & 1.29 & 2.86e$-$01 & 0.91 & 2.13e$-$01 & 0.86 \\\hline
 2048 & 2.43e$-$02 & 2.93 & 7.73e$-$02 & 3.06 & 2.38e$-$02 & 2.40 & 1.30e$-$01 & 1.14 & 9.34e$-$02 & 1.19 \\\hline
 8192 & 4.76e$-$03 & 2.35 & 9.94e$-$03 & 2.96 & 4.49e$-$03 & 2.40 & 6.35e$-$02 & 1.03 & 2.40e$-$02 & 1.96 \\\hline
32768 & 1.11e$-$03 & 2.11 & 1.39e$-$03 & 2.84 & 1.02e$-$03 & 2.13 & 3.17e$-$02 & 1.00 & 4.67e$-$03 & 2.36 \\\hline
131072 & 2.73e$-$04 & 2.02 & 2.74e$-$04 & 2.34 & 2.53e$-$04 & 2.02 & 1.58e$-$02 & 1.00 & 8.24e$-$04 & 2.50 \\\hline
	\end{tabular}
	\label{clbc1}
\end{table}
\paragraph{Example 2}
For our last example with clamped boundary condition the exact solution is 
chosen as 
\begin{equation}\label{ex.sol.4}
u = 4096x^3(1-x)^3y^3(1-y)^3\left(\frac{2}{5}e^x + \cos(y)\right).
\end{equation}
As in the previous example, this solution also satisfies the clamped boundary condition. 
We have tabulated the relative error in various norms in Table \ref{clbc2}. 
The results are very similar to the ones as in the first example. However, the relative error in the case of 
clamped boundary conditions are higher than in the case of simply supported boundary conditions. 
We can also see that the asymptotic rates of error reduction start later in this case 
due to the extrapolation on the boundary patch of the domain. 
\begin{table}[htb!]
	\centering
	\caption{Discretisation errors  for the clamped boundary condition: exact solution \eqref{ex.sol.4}}
	\begin{tabular}{|c|cc|cc|cc|cc|cc|}
		\hline
			\multirow{2}{*}{elem} & \multicolumn{2}{c|}{$\frac{\left\Vert u - u_h\right\Vert_{0,\Omega}}{\left\Vert u\right\Vert_{0,\Omega}}$} & \multicolumn{2}{c|}{$\frac{| u - u_h|_{1,\Omega}}{|u|_{1,\Omega}}$} & \multicolumn{2}{c|}{$\frac{\left\Vert \phi - \phi_h\right\Vert_{0,\Omega}}{\left\Vert \phi\right\Vert_{0,\Omega}}$} & \multicolumn{2}{c|}{$\frac{|\phi - \phi_h|_{1,\Omega}}{| \phi|_{1,\Omega}}$} & \multicolumn{2}{c|}{$\frac{\left\Vert \lambda - \lambda_h\right\Vert_{0,\Omega}}{\left\Vert \lambda\right\Vert_{0,\Omega}}$}\\ \hhline{~----------}
		& error & rate & error & rate & error & rate & error & rate & error & rate \\\hline
 32 & 8.38 &   & 1.33e+01 &   & 7.77e$-$01 &          & 1.00 &          & 1.19 &          \\\hline
  128 & 1.36 & 2.63 & 4.01 & 1.73 & 4.45e$-$01 & 0.80 & 6.54e$-$01 & 0.62 & 6.40e$-$01 & 0.90 \\\hline
  512 & 2.05e$-$01 & 2.72 & 7.61e$-$01 & 2.40 & 1.30e$-$01 & 1.78 & 2.89e$-$01 & 1.18 & 2.20e$-$01 & 1.54 \\\hline
 2048 & 2.46e$-$02 & 3.06 & 8.75e$-$02 & 3.12 & 2.40e$-$02 & 2.44 & 1.30e$-$01 & 1.15 & 9.39e$-$02 & 1.23 \\\hline
 8192 & 4.76e$-$03 & 2.37 & 1.07e$-$02 & 3.03 & 4.53e$-$03 & 2.40 & 6.36e$-$02 & 1.03 & 2.41e$-$02 & 1.96 \\\hline
32768 & 1.10e$-$03 & 2.11 & 1.45e$-$03 & 2.89 & 1.03e$-$03 & 2.13 & 3.17e$-$02 & 1.00 & 4.71e$-$03 & 2.36 \\\hline
131072 & 2.72e$-$04 & 2.02 & 2.78e$-$04 & 2.38 & 2.55e$-$04 & 2.02 & 1.59e$-$02 & 1.00 & 8.35e$-$04 & 2.50 \\\hline
	\end{tabular}
	\label{clbc2}
\end{table}

\begin{remark}
We have proved the error estimate in the mesh-dependent norm $|\cdot |_{k,h}$ for the clamped boundary condition case. 
This norm can be estimated by the standard $L^2$-norm and $H^1$-norm as follows. 
Using the triangle inequality we have 
\begin{eqnarray*}
\snorm{(u-u_h,\phi-\phi_h)}{k,h}^2
&=& \|\phi -\phi_h - \Delta_h u + \Delta_h u_h\|^2_{0,\Omega} + \|\nabla (\phi- \phi_h)\|^2_{0,\Omega} \\
&\leq& C(  \|\phi -\phi_h \|^2_{0,\Omega} +\| \Delta_h u - \Delta_h u_h\|^2_{0,\Omega} +  \|\nabla (\phi- \phi_h)\|^2_{0,\Omega}).
\end{eqnarray*}
We now only consider the middle term of the last line  of the last inequality. 
Using  the definition of $\Delta_h$,  the  $L^2$-norm  and the standard inverse estimate we have a constant $C$ independent of 
the mesh-size $h$ such that 
\begin{eqnarray*} \| \Delta_h u - \Delta_h u_h\|_{0,\Omega} 
&\leq & 
\sup_{\phi_h \in S_{h,0}^{k+1}} \frac{\int_{\Omega} (\nabla u - \nabla u_h) \cdot \nabla \phi_h\,dx}{\|\phi_h\|_{0,\Omega}}\\
&\leq & \frac{C}{h} \|\nabla u - \nabla u_h\|_{0,\Omega}.
\end{eqnarray*}
Since the computed errors behave like   $\|\phi -\phi_h \|_{0,\Omega}=O(h^2) $,  $ \|\nabla u - \nabla u_h\|_{0,\Omega}=O(h^2)$ and 
 $\|\nabla \phi - \nabla \phi_h\|_{0,\Omega}=O(h)$, the errors for $u$ and $\phi$ in the mesh-dependent norm $|\cdot|_{k,h}$ behave as
   $\snorm{(u-u_h,\phi-\phi_h)}{k,h}=O(h)$.
\end{remark}

\section*{Acknowledgements}
\begin{itemize}
\item We are  grateful to the anonymous referees for their valuable suggestions 
to improve the quality of the earlier version of this work.

\item Part of this work was completed during a visit of the fourth author to the University of Newcastle. He is grateful for their hospitality.
The fourth author was partially supported by an Australian Research Council (ARC) grant DP120100097.
He is currently partially supported by ARC grant DP150100375.

\item The first author is partially supported by ARC grant DP170100605.
\end{itemize}

\appendix

\section{Proof of Theorem \ref{TMexistence}}

The existence and uniqueness of a solution
to \eqref{wbiharm} follows from the Ladyzenskaia--Babushka--Brezzi theory, provided that
we establish the following properties.
\begin{enumerate}
\item The bilinear forms $a(\cdot,\cdot)$, $b(\cdot,\cdot)$ and the linear form 
$\ell(\cdot)$ are continuous on $V \times V$, $V \times \M$ and 
$V$, respectively. 
\item The bilinear form $a(\cdot,\cdot)$ is coercive on the kernel space 
\[ \CV =\{ (v, \psi) \in V :\, b((v,\psi),\mu) = 0,\; \mu\in \M\}.\]  
\item The bilinear form $b(\cdot,\cdot)$ satisfies the inf--sup condition, for some $\beta>0$:
\[ \inf_{\mu \in \M} \sup_{(v, \psi)  \in V} \frac{b((v,\psi),\mu)} 
{ \|(v,\psi)\|_{V} \|\mu\|_{\M}} \geq \beta. \]

\end{enumerate}
The Cauchy--Schwarz inequality implies that the bilinear forms $a(\cdot,\cdot)$, $b(\cdot,\cdot)$ and the linear form 
$\ell(\cdot)$ are continuous on $V \times V$, $V \times \M$ and 
$V$, respectively. 
 We now turn our attention to the second condition. 
In fact, for $(u, \phi) \in V $ satisfying $ b((u,\phi),\mu) = 0$ for all $\mu \in \M\supset H^1_0(\Omega)$ we have  with $\mu=u$ 
\[ \int_{\Omega} \nabla u \cdot \nabla u \,dx = - \int_{\Omega} \phi u\,dx.\]
Hence, using Cauchy--Schwarz and the Poincar\'e inequality we find 
\[ \|\nabla u \|^2_{0,\Omega} \leq C \|\phi\|_{0,\Omega} \|\nabla u\|_{0,\Omega}.\]
Thus we have 
\[ \|\nabla u \|_{0,\Omega}  \leq C \|\phi\|_{0,\Omega}.\]
From this inequality we  infer 
\[ \|\nabla u \|^2_{0,\Omega}+  \|\nabla \phi\|^2_{0,\Omega}
\leq C \|\phi\|^2_{0,\Omega} + \|\nabla \phi\|^2_{0,\Omega}.\]
We use the Poincar\'e inequality again to obtain the coercivity 
\[ \|\nabla u \|^2_{0,\Omega}+  \|\nabla \phi\|^2_{0,\Omega}\leq C
 \|\phi\|^2_{0,\Omega} + \|\nabla \phi\|^2_{0,\Omega}
\leq C a((u,\phi),(u,\phi)), \quad (u,\phi) \in \CV.\]

Let us now consider the inf--sup condition in the case of simply supported BCs, that
is $\M=H^1_0(\Omega)$ with natural norm.
For all $\mu\in H^1_0(\Omega)$,
\[
b((\mu,0),\mu)=-\langle \mu,\Delta\mu\rangle = \int_\Omega |\nabla\mu|^2=\|\mu\|_{H^1_0(\Omega)}^2
\]
and thus 
\[
\sup_{(v, \psi)  \in V} \frac{b((v,\psi),\mu)} 
{ \|(v,\psi)\|_{V} }\ge \frac{b((\mu,0),\mu)}{\|\mu\|_{H^1_0(\Omega)}}\ge \|\mu\|_{H^1_0(\Omega)}.
\]

We finally consider the inf--sup condition in the case of clamped boundary conditions,
for which $\M=\{\mu\in H^{-1}(\Omega)\,:\,\Delta \mu\in H^{-1}(\Omega)\}$ with corresponding
graph norm.
We have
\[  \sup_{(v, \psi)  \in V} \frac{b((v,\psi),\mu)} 
{ \|(v,\psi)\|_{V} }  = \sup_{(v, \psi)  \in V} 
\frac{ {\langle\psi,\mu\rangle}-\langle v, \Delta \mu \rangle } 
{ \|(v,\psi)\|_V }\,.   \]
Now setting $\psi=0$ we obtain 
\[ \sup_{(v, \psi)  \in V} \frac{b((v,\psi),\mu)} 
{ \|(v,\psi)\|_{V} } 
\geq \sup_{ v  \in {H^1_0(\Omega)}} 
\frac{ \langle v,\Delta\mu\rangle} 
{ \|\nabla v\|_{0,\Omega} }   \geq c_1 \|\Delta \mu\|_{-1,\Omega},\]
where we have used Poincar\'e inequality in the last step. 
Similarly, using $v=0$ we get 
\[ \sup_{(v, \psi)  \in V} \frac{b((v,\psi),\mu)} 
{ \|(v,\psi)\|_{V} } 
\geq \sup_{ \psi  \in H^1_0(\Omega)} 
\frac{\langle\psi,\mu\rangle}  
{ \|\nabla \psi\|_{0,\Omega} }   \geq  c_2 \|\mu\|_{-1,\Omega},\]
and, hence, there exists a constant $\beta>0$ such that 
\[  \sup_{(v, \psi)  \in V} \frac{b((v,\psi),\mu)} 
{ \|(v,\psi)\|_{V} }  =\sup_{(v, \psi)  \in V} 
\frac{ \langle \psi\,\mu\rangle-\langle v, \Delta \mu \rangle } 
{ \|(v,\psi)\|_{V} }\geq \beta \|\mu\|_\M.   \]
Hence,  \eqref{wbiharm} has a unique solution. 
{{\quad \ \vbox{\hrule\hbox{%
   \vrule height1.0ex\hskip1.0ex\vrule}\hrule
  }}\par}

\bibliographystyle{siam}
\bibliography{total}
\end{document}